\newcommand{\A}{\mathcal{A}}
\theoremstyle{plain}
\newtheorem{theorem}{Theorem}
\newtheorem*{theorem*}{Theorem}
\newtheorem{corollary}[theorem]{Corollary}
\newtheorem*{corollary*}{Corollary}
\newtheorem{lemma}[theorem]{Lemma}
\newtheorem*{lemma*}{Lemma}
\newtheorem*{proposition*}{Proposition}
\newtheorem{conjecture}{Conjecture}
\newtheorem*{conjecture*}{Conjecture}
\theoremstyle{definition}
\newtheorem*{definition*}{Definition}
\newtheorem*{example*}{Example}
\newtheorem*{problem*}{Problem}
\theoremstyle{remark}
\newtheorem{remark}{Remark}
\newtheorem*{remark*}{Remark}
\title{Crucial words for abelian powers}
\author[A. Glen]{Amy Glen}
\email[A. Glen]{amy.glen@gmail.com}
\author[B.V. Halld\'orsson]{Bjarni V. Halld\'orsson}
\email[B.V. Halld\'orsson]{bjarnivh@ru.is}
\author[S. Kitaev]{Sergey Kitaev}
\email[S. Kitaev]{sergey@ru.is}
\address{The Mathematics Institute, Reykjav\'ik University, Kringlan 1, 103 Reykjav\'ik, Iceland}
\subjclass[2000]{05D99; 68R05; 68R15}
\keywords{combinatorics on words; pattern avoidance; abelian square-free word; abelian cube-free word; abelian power; crucial word; Zimin word}
\date{December 25, 2008}
\begin{document}

\begin{abstract}
A word is {\em crucial} with respect to a given set of {\em prohibited words} (or simply  {\em prohibitions}) if it avoids the prohibitions but it cannot be extended to the right
by any letter of its alphabet without creating a prohibition.
A {\em minimal crucial word} is a crucial word of the
shortest length. A word $W$ contains an {\em abelian $k$-th power}
if $W$ has a factor of the form $X_1X_2\ldots X_k$ where $X_i$ is a
permutation of $X_1$ for $2\leq i\leq k$. When $k=2$ or $3$, one deals with
{\em abelian squares} and {\em abelian cubes}, respectively.

Evdokimov and Kitaev~\cite{EK} have shown that a minimal crucial
word over an $n$-letter alphabet $\A_n = \{1,2,\ldots, n\}$ avoiding abelian squares has length $4n-7$ for $n\geq 3$. 
In this paper, we
show that a minimal crucial word over $\A_n$ avoiding abelian cubes has length
$9n-13$ for $n\geq 5$, and it has length 2, 5, 11, and 20 for
$n=1,2,3$, and 4, respectively. Moreover, for $n\geq 4$ and $k\geq 2$, we give a
construction of length $k^2(n-1)-k-1$ of a crucial word over $\A_n$ avoiding abelian $k$-th powers. This construction gives the minimal length for $k=2$ and $k=3$. 
For $k \geq 4$ and $n\geq 5$, we provide a lower bound for the length of crucial words over $\A_n$ avoiding abelian $k$-th powers. 
\end{abstract}

  \maketitle
  \thispagestyle{empty}

\section{Introduction}

Let $\A_n=\{1,2,\ldots,n\}$ be an $n$-letter alphabet and let $k \geq 2$ be an integer. 
A word $W$ over $\A_n$ contains a {\em $k$-th power} if $W$ has 
a factor of the form $X^k = XX\ldots X$ ($k$ times) for some non-empty
word $X$. A $k$-th power is {\em trivial} if $X$ is a single letter. For example, the word $V = 13243232323243$ contains the (non-trivial) $4$-th power $(32)^4 = 32323232$.   
A word $W$ contains an {\em abelian $k$-th power} if $W$ has a
factor of the form $X_1X_2\ldots X_k$ where $X_i$ is a permutation of $X_1$ for
$2\leq i\leq k$.  The cases $k=2$ and $k=3$ give us ({\em abelian}) {\em squares} and
{\em cubes}, respectively. For instance, the preceding word $V$ contains the abelian square $43232\thinspace 32324$ and the word $123\thinspace 312 \thinspace 213$ is an abelian cube. A word is ({\em abelian}) {\em $k$-power-free} if it {\em avoids} (abelian) $k$-th powers.  For example, the word $1234324$ is abelian cube-free, but not abelian square-free since it contains the abelian square $234\thinspace 324$. 

A word $W$ over $\A_n$ is {\em crucial} with respect to a given set of {\em prohibited words} (or simply {\em prohibitions}) if $W$ avoids the prohibitions, but $Wx$ does not avoid the prohibitions for
any $x\in\A_n$. A {\em minimal crucial word} is a crucial word of the
shortest length. For example, the word $W = 2 1 2 1 1$ (of length $5$) is crucial with respect to abelian cubes since it is abelian cube-free and the words $W1$ and $W2$ end with the abelian cubes $111$ and $21\thinspace 21 \thinspace 12$, respectively. Actually, $W$ is a minimal crucial word over $\{1,2\}$ with respect to abelian cubes. Indeed, one can easily verify that there does not exist any crucial abelian cube-free words over $\{1,2\}$ of length less than $5$.

Abelian squares were first introduced by Erd\H{o}s~\cite{pE61some}, who asked whether or not there exist words of arbitrary length over a fixed finite alphabet that avoid factors of the form $XX'$ where $X'$ is a permutation of $X$. This question has since been solved in the affirmative; see for instance \cite{aC98onth, aE68stro, vK92abel, pP70nonr} for work in this direction. Problems of this type were also considered by Zimin~\cite{Zimin}, who used the following sequence of words as a key tool.

The {\em Zimin word}~\cite{Zimin} $Z_n=X_n$ over $\A_n$ is defined recursively as
follows: $X_1=1$ and $X_n=X_{n-1}nX_{n-1}$ for $n \geq 2$. The first four Zimin words are:
\begin{align*}
X_1 &= 1, \\
X_2 &= 121, \\
X_3 &= 1213121, \\
X_4 &= 121312141213121.
\end{align*}
 The {\em
$k$-generalised Zimin word} $Z_n^k=X_n$ is defined as
$$X_1= 1^{k-1} = 11\ldots 1,\ X_n= (X_{n-1}n)^{k-1} X_{n-1} = X_{n-1}nX_{n-1}n\ldots nX_{n-1}$$
where the number of 1's, as well as the number of $n$'s, is $k-1$.
Thus $Z_n=Z_n^2$. It is easy to see that $Z_n^k$ avoids (abelian)
$k$-th powers and it has length $k^n-1$. Moreover, it is known that
$Z_n^k$ gives the length of a minimal crucial word avoiding $k$-th powers.

However, in the case of abelian powers the situation is not as well
studied. Crucial abelian square-free words (also called {\em left
maximal abelian square-free words}) of exponential length are given
in~\cite{C} and~\cite{EK}, and it is shown in~\cite{EK} that a
minimal crucial abelian square-free word over an $n$-letter alphabet
has length $4n-7$ for $n\geq 3$.

In this paper, we extend the study of crucial abelian
$k$-power-free words to the case of $k>2$.  In particular, we provide a
complete solution to the problem of determining the length of a minimal crucial abelian cube-free word (the case $k = 3$) and we conjecture a solution in the general case. More precisely, we show that a
minimal crucial word over $\A_n$ avoiding abelian cubes has length $9n-13$
for $n\geq 5$ (Corollary~\ref{min-length}), and it has length 2, 5, 11, and 20 for $n=1,2,3$, and
$4$, respectively. For $n \geq 4$ and $k\geq 2$, we give a construction of
length $k^2(n-1)-k-1$ of a crucial word over $\A_n$ avoiding abelian
$k$-th powers (see Theorem~\ref{opt-upper-k}). This construction gives the minimal length for $k = 2$ and $k= 3$, and we conjecture that this is also true for any $k\geq 4$ and sufficiently large~$n$. 
We also provide a rough lower bound for the length of minimal crucial words over $\A_n$ avoiding abelian $k$-th powers,  for $n\geq 5$ and $k\geq 4$ (see Theorem~\ref{rough-l-bound}). 

We let $\ell_k(n)$ denote the length of a minimal crucial abelian
$k$-power-free word over $\A_n$ and we denote by $|W|$ the length of a word
$W$.  For a crucial word $X$ over $\A_n$, we let $X=X_i\Delta_i$, 
where $\Delta_i$ is the minimal factor such that $\Delta_ii$ is a
prohibition. Note that we can rename letters, if needed, so we can assume that for any minimal crucial word $X$, one has
$$\Delta_1\subset\Delta_2\subset\cdots\subset\Delta_n=X$$
where ``$\subset$'' means (proper) {\em right factor} (or {\em suffix}). In other
words, $\Delta_i=Y_i\Delta_{i-1}$ for each $i=2,3,\ldots,n$ and
$Y_i$ is not empty. In what follows we will use $X_i$
and $Y_i$ as stated above.  We note that the definitions imply:
$$X=X_i\Delta_i=X_iY_i\Delta_{i-1}=X_{n-1}Y_{n-1}Y_{n-2}\ldots Y_2\Delta_1,$$
for any $i=2,3,\ldots,n-1$.  Furthermore, assuming we consider $k$-th powers, we
write $\Delta_ii=\Omega_{i,1}\Omega_{i,2}\ldots\Omega_{i,k}$, where the $k$ {\em blocks} $\Omega_{i,j}$ are equal up to permutation, and we denote by $\Omega'_{i,k}$ the block $\Omega_{i,k}$ without the rightmost $i$.

\section{Crucial words for abelian cubes}\label{cube}

\subsection{An upper bound for $\ell_3(n)$}\label{sub-u}

The fact that the 3-generalised Zimin word $Z_n^3$ is crucial with
respect to abelian cubes already gives us an upper bound of $3^n-1$ for
$\ell_3(n)$. In Theorem~\ref{cubes} below we improve this upper
bound to $3\cdot 2^{n-1} - 1$. We then discuss a further
improvement of the bound using a greedy algorithm, which gives
asymptotically $O((\sqrt{3})^n)$. This greedy algorithm
provides minimal crucial abelian cube-free words over $\A_n$ for $n=3$ and
$n=4$, while the construction in Theorem~\ref{cubes} is optimal only
for $n=3$. We also provide a construction of a crucial word of
length $9n-10$ which exceeds our lower bound by only 3
letters, for $n\geq 5$. Finally, we end this section with a construction of a crucial abelian cube-free word over $\A_n$ of length $9n-13$, which coincides with the lower bound  given in Theorem~\ref{lower} of Section~\ref{sub-l} for $n\geq 5$. 

\begin{theorem}\label{cubes} One has that $\ell_3(n)\leq 3\cdot2^{n-1}-1$.\end{theorem}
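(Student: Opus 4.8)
**The plan is to construct explicitly a crucial abelian cube-free word over $\A_n$ of length $3\cdot 2^{n-1}-1$, recursively mirroring the recursive structure of the Zimin word but with shorter "blocks."** The key observation is that the $3$-generalised Zimin word $Z_n^3 = (X_{n-1}n)^2 X_{n-1}$ has length $3^n - 1$ because each level roughly triples the length; to get $3\cdot 2^{n-1}-1$ I want a construction where each new letter roughly doubles the length. So I would define a sequence $W_1, W_2, \ldots$ with $W_1$ of length $2$ (for instance $W_1 = 11$ over $\{1\}$, consistent with $\ell_3(1)=2$) and a recursion of the shape $W_n = W_{n-1}\, n\, S_n$ where $S_n$ is a suitably short suffix depending on $W_{n-1}$, chosen so that $|W_n| = 2|W_{n-1}|+1$. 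Solving this recurrence with $|W_1| = 2$ gives $|W_n| = 3\cdot 2^{n-1}-1$, the claimed bound.

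**The two things I must then verify are (i) abelian cube-freeness of $W_n$ and (ii) cruciality**, i.e. that appending any letter $x \in \A_n$ creates an abelian cube ending at the right. For cruciality I would use the notation set up in the introduction: I need, for each $i \in \{1,\ldots,n\}$, a suffix $\Delta_i$ of $W_n$ such that $\Delta_i i$ is an abelian cube, i.e. $\Delta_i i = \Omega_{i,1}\Omega_{i,2}\Omega_{i,3}$ with the three blocks equal up to permutation. The recursive design should make the letter $n$ (the new letter) the one that completes the "outermost" abelian cube $\Delta_n n = W_n n$, while the shorter suffixes $\Delta_1 \subset \cdots \subset \Delta_{n-1}$ that handle the letters $1,\ldots,n-1$ are inherited from the structure of $W_{n-1}$ sitting inside $W_n$. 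The nesting $\Delta_1 \subset \Delta_2 \subset \cdots \subset \Delta_n$ should fall out naturally from the recursion if the suffix $S_n$ is chosen to respect it.

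**I expect the main obstacle to be proving abelian cube-freeness of the whole word $W_n$**, since abelian powers are far more delicate than ordinary powers: a factor can be spread across the recursive boundaries and still be an abelian cube even though it is not a literal cube, because the blocks only need equal letter-multisets. The clean recursive proof available for the Zimin word does not transfer directly, because the shortened blocks $S_n$ break the perfect self-similarity that makes $Z_n^3$ easy to analyse. I would therefore argue by induction on $n$, assuming $W_{n-1}$ is abelian cube-free and showing that any putative abelian cube factor of $W_n$ must, by a Parikh-vector (letter-count) argument, either lie entirely within a copy of $W_{n-1}$ (contradiction by induction) or straddle occurrences of the single top letter $n$; in the latter case the unique top letter $n$ cannot be distributed equally among three blocks unless the cube degenerates, yielding a contradiction.

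**Finally I would assemble these pieces**: exhibit the explicit $W_n$, check the length recurrence, verify cube-freeness by the induction above, and verify cruciality by displaying the nested suffixes $\Delta_i$, thereby concluding $\ell_3(n) \le |W_n| = 3\cdot 2^{n-1}-1$.
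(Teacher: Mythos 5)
Your proposal has a genuine gap, and in fact the structural shape you commit to cannot work. The heart of any proof of this bound is the explicit construction, and you never specify the suffix $S_n$; everything else in your outline is conditional on a choice you do not make. More seriously, your stated design goal --- that $W_n = W_{n-1}\, n\, S_n$ with $W_{n-1}$ a literal prefix, $|S_n| = |W_{n-1}|$, and the letter $n$ ``completing the outermost abelian cube $\Delta_n n = W_n n$'' --- is provably impossible. If $W_n n$ is an abelian cube, its three blocks each have length $2^{n-1}$ and equal multisets of letters; the third block ends with the appended $n$, so the first block must also contain an occurrence of $n$. But the first block occupies the first $2^{n-1}$ positions of $W_n$, and since $2^{n-1} \leq |W_{n-1}| = 3\cdot 2^{n-2}-1$ for all $n \geq 2$, it lies entirely inside the prefix $W_{n-1}$, a word over $\A_{n-1}$ containing no occurrence of $n$ at all --- a contradiction. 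Relatedly, your plan to inherit cruciality for the letters $1, \ldots, n-1$ ``from the structure of $W_{n-1}$ sitting inside $W_n$'' confuses suffixes of $W_{n-1}$ with suffixes of $W_n$: the witnesses $\Delta_i$ must be suffixes of $W_n$, hence end inside $S_n$, so they cannot simply be read off from the prefix copy of $W_{n-1}$.

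The paper's construction avoids this trap by making the new letter the \emph{smallest} one rather than the largest, and by spreading it throughout the word rather than appending it after a prefix copy: one increments every letter of $X_{n-1}$ by $1$, inserts a $1$ after each letter, and appends one extra $1$. Then $X_{n-1}$ survives as the subsequence of non-$1$ letters; abelian cube-freeness follows by deleting the $1$'s from a putative cube (no factor $111$ exists, so the blocks stay non-empty) and invoking induction; and cruciality follows since appending $1$ creates $111$, while appending $x > 1$ becomes, after swapping the last two letters and deleting the $1$'s, the word $X_{n-1}(x-1)$, which ends in an abelian cube by the induction hypothesis. Any repair of your approach would need the letter $n$ to occur already in the first third of $W_n$, which is incompatible with keeping $W_{n-1}$ as a prefix of the stated length.
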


\begin{proof} We construct a crucial abelian cube-free word $X=X_n$ iteratively as follows.
Set $X_1=11$ and assume $X_{n-1}$ has been constructed. Then do the following:
\begin{enumerate}\item Increase all letters of $X_{n-1}$ by 1 to obtain $X^+_{n-1}$.
\item Insert 1 after (to the right of) each letter of $X^+_{n-1}$ and adjoin one extra 1 to the right of the resulting
word to get $X_n$.\end{enumerate} For example, $X_2=21211$,
$X_3=31213121211$, etc. It is easy to verify that $|X_n|=3\cdot2^{n-1}-1$. We show by induction that $X_n$ avoids abelian cubes, whereas $X_nx$ does not avoid abelian cubes for any $x \in \A_n$.
Both claims are trivially true for $n=1$.  Now take $n \geq 2$. If $X_n$ contains an abelian cube,
then removing 1's from it, we would deduce that $X_{n-1}$ must also contain
an abelian cube, contradicting the fact that $X_1$ contains no abelian
cubes.

It remains to show that extending $X_n$ to the right by
any letter $x$ from $\A_n$ creates an abelian cube. If $x=1$ then we
get $111$ from the construction of $X_n$.
On the other hand, if $x>1$  then we swap the rightmost 1 with
the rightmost $x$ in $Xx$, thus obtaining a word where every other
letter is 1; removing all 1's and decreasing each of the remaining
letters by 1, we have $X_{n-1}(x-1)$, which contains an abelian
cube (by the induction hypothesis). 
\end{proof}

As a further improvement of Theorem~\ref{cubes} we sketch here,
without providing all the details, the work of a {\em greedy
algorithm}. Here, ``greedy'' means that we assume (by induction) that an ``optimal''
crucial abelian cube-free word $X_{n-1}$ over $\A_{n-1}$ has been constructed. Next (for the greedy step) we add just two $n$'s (the minimum possible), and then we add as few of the other letters as possible to build a crucial abelian cube-free word $X_n$ over $\A_n$. More precisely, we set $\Delta_1=11$ and assuming that
$\Delta_{i-1}$ is built, we consider the minimum set $T_i$ of
letters we must add (forming $Y_i$ and possibly updating
$\Delta_{i-1}$ by permuting letters in $Y_{i-1}$) to build
$\Delta_i$, for $i=2,3,\ldots,n$, which can then be turned into a
construction of a crucial abelian cube-free word $X_n$ over $\A_n$. It is easy to see
from the definitions of an abelian cube and a crucial word that 
$T_2$ must contain at least two 2's and at least one 1. Then $T_3$
must contain at least two 3's, at least one 2, and at least three
1's (the last statement follows from the fact that the two 3's in
$\Delta_3$ are supposed to be accompanied by at least one 1 in
$\Omega_{3,1}$ and $\Omega_{3,2}$ in
$\Delta_33=\Omega_{3,1}\Omega_{3,2}\Omega_{3,3}$, but without extra
1's, whose number of occurrences must be divisible by 3, we cannot
manage it). Running this type of argument, one can come up with
the following list of minimal requirements for $T_i$ for initial
values of $i$:
$$T_2=\{1,2,2\},T_3=\{1,1,1,2,3,3\},T_4=\{1,1,1,2,2,2,3,4,4\},$$
$$T_5=\{1,1,1,1,1,1,1,1,1,2,2,2,3,3,3,4,5,5\},$$
$$T_6=\{1,1,1,1,1,1,1,1,1,2,2,2,2,2,2,2,2,2,3,3,3,4,4,4,5,6,6\}, \mbox{etc.}$$
In particular, we observe that $(|T_{n}|)_{n\geq 1} = 2, 3, 6, 9, 18, 27, \ldots$ where
\[
|T_{2i+1}| = 2\cdot 3^i \quad \textrm{and} \quad |T_{2i+2}| = 3^i \quad \textrm{for all $i \geq 0$.}
\]
Hence, these considerations lead to a crucial abelian cube-free word over an $n$-letter alphabet of
length 
$$\sum_{j=0}^{\lfloor \frac{n-1}{2}\rfloor} 2\cdot 3^j + \sum_{j=1}^{\lfloor \frac{n}{2} \rfloor} 3^j.$$
Initial values for the lengths are 2, 5, 11, 20, 38, 65, $\ldots$~.
Furthermore, by observing that $\sum_{j=0}^{m} 3^j = \frac{3^{m+1} - 1}{2}$ for any integer $m \geq 0$ (which can be easily proved by induction), we deduce that the length of a crucial abelian cube-free word obtained in this way is asymptotically $O((\sqrt{3})^n)$. Even though this length is exponential, we {\em do} in fact obtain optimal values for $n\leq
4$. Below we list optimal abelian cube-free crucial words over $\A_n$ for $n=1,2,3,4$ of lengths
2,5,11, and 20, respectively:
$$11;$$
$$21211;$$
$$1 1 2 3 1 3 2 1 2 1 1;$$
$$4 2 1 3 1 2 1 4 2 3 1 2 1 1 3 2 1 2 1 1.$$

A construction giving the best possible upper bound for $n\geq 5$
can be easily described by examples, and we do this below (for
$n=4,5,6,7$; the construction does not work for $n\leq 3$).
We also provide a general description. The pattern in the
construction is easy to recognise. 

\medskip
\noindent {\bf An almost optimal construction for crucial abelian
cube-free words.} The construction of the word $W_n$ for $n=4,5,6,7$
is shown below. We use spaces to separate the blocks $\Omega_{n,1}$,
$\Omega_{n,2}$, and $\Omega'_{n,3}$ in $W_n=\Delta_n$ for a more
pleasing visual representation.
\newpage
$$W_4 = 3 4 4 2 3 3 1 2 2\ 4 3 3 2 2 1 4 3 2\ 3 2 1 2 2 3 3 4$$
$$W_5=4 5 5 3 4 4 2 3 3 1 2 2\ 5 4 4 3 3 2 2 1 5 4 3 2\ 4 3 2 1 2 2 3
3 4 4 5$$
$$W_6=5 6 6 4 5 5 3 4 4 2 3 3 1 2 2\ 6 5 5 4 4 3 3 2 2 1 6 5 4 3 2\ 5
4 3 2 1 2 2 3 3 4 4 5 5 6$$
$$W_7=6 7 7 5 6 6 4 5 5 3 4 4 2 3 3 1 2 2\ 7 6 6 5 5 4 4 3 3 2 2 1 7 6
5 4 3 2\ 6 5 4 3 2 1 2 2 3 3 4 4 5 5 6 6 7$$

In general, the block $\Omega_{n,1}$ in
$W_n=\Delta_n=\Omega_{n,1}\Omega_{n,2}\Omega'_{n,3}$ is built by
adjoining the factors $i(i+1)(i+1)$ for
$i=n-1,n-2,\ldots,1$. The block $\Omega_{n,2}$ is built by adjoining together
the following factors: $n$, $xx$ for $n-1\geq x\geq 2$, and
$n(n-1)\ldots 2$. Finally, the block $\Omega'_{n,3}$ is built by adjoining the
factors $(n-1)(n-2)\ldots 1$, $xx$ for $2\leq x\leq n-1$, and finally $n$.

It is easy to see that $|W_n|=9n-10$. We omit the details of showing that
$W_n$ is crucial with respect to abelian cubes since this can be shown in a similar
manner for the construction below (see the proof of Theorem~\ref{opt-upper-k}).

\medskip
\noindent {\bf An optimal construction for crucial abelian cube-free
words.} The construction of the word $E_n$ for $n=4,5,6,7$ works as
follows. As above, we use spaces to separate the blocks $\Omega_{n,1}$,
$\Omega_{n,2}$, and $\Omega'_{n,3}$ in $E_n=\Delta_n$.
$$E_4 = 34423311 \ 34231134 \ 3233411$$
$$E_5 = 4 5 5 3 4 4 2 3 3 1 1\ 4 5 3 4 2 3 1 1 3 4 5 \ 4 3 2 3 3 4 4 5 1 1$$
$$E_6 = 5 6 6 4 5 5 3 4 4 2 3 3 1 1\ 5 6 4 5 3 4 2 3 1 1 3 4 5 6 \ 5 4 3 2 3 3 4 4 5 5 6 1 1$$
$$E_7 = 6 7 7 5 6 6 4 5 5 3 4 4 2 3 3 1 1 \  6 7 5 6 4 5 3 4 2 3 1 1 3 4 5 6 7 \  6 5 4 3 2 3 3 4 4 5 5 6 6 7 1 1$$

In general, the block $\Omega_{n,1}$ in
$E_n=\Delta_n=\Omega_{n,1}\Omega_{n,2}\Omega'_{n,3}$ is built by
adjoining the factors $i(i+1)(i+1)$ for
$i=n-1,n-2,\ldots,2$, followed by two 1's. The block $\Omega_{n,2}$
is built by adjoining the following factors: $i(i+1)$ for $i = n-1, n-2, \ldots, 2$, followed by $11$, and then  the factor $34\ldots (n-1)n$. Finally, the block $\Omega'_{n,3}$ is built by adjoining the factors
$(n-1)(n-2)\ldots 32$, then $xx$ for $3\leq x\leq n-1$, followed by
$n$, and finally two 1's.

By construction, we have
\[
E_{n} = \Omega_{n,1}\Omega_{n,2}\Omega'_{n,3}
\]
where $\Omega_{n,3} = \Omega'_{n,3}n$ and each $\Omega_{n,i}$  contains two $1$'s, one $2$, two $n$'s, and three $x$'s for $x = 3, \ldots, n-1$. Hence, it is easy to see that  $|E_n|=9n-13$. The fact that $E_n$ is crucial with respect to abelian cubes is proved in Theorem~\ref{opt-upper-k} where one needs to set $k=3$. 
Thus, a minimal crucial word avoiding abelian cubes has length at most $9n-13$ for $n\geq 4$. That is:

\begin{theorem}\label{opt-upper} For $n\geq 4$, we have $\ell_3(n)\leq 9n-13$.\end{theorem}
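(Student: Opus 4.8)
The plan is to exhibit a single crucial abelian cube-free word over $\A_n$ whose length is exactly $9n-13$; since $\ell_3(n)$ is by definition the length of a \emph{shortest} crucial abelian cube-free word, producing any such witness immediately yields $\ell_3(n)\le 9n-13$. The word $E_n$ constructed just above is the intended witness, so the whole argument reduces to two verifications: that $|E_n|=9n-13$, and that $E_n$ is crucial with respect to abelian cubes.

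The length count is the routine step. Reading off the construction, each of the three blocks $\Omega_{n,1},\Omega_{n,2},\Omega_{n,3}$ carries the same letter-multiset, namely two $1$'s, one $2$, two $n$'s, and three copies of each $x\in\{3,\dots,n-1\}$; hence each full block has length $2+1+2+3(n-3)=3n-4$. Since $E_n=\Omega_{n,1}\Omega_{n,2}\Omega'_{n,3}$ with $\Omega'_{n,3}$ obtained from $\Omega_{n,3}$ by deleting its trailing $n$, we obtain $|E_n|=3(3n-4)-1=9n-13$, as required.

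The substance of the argument is cruciality, which I would split along its two defining conditions. For right-extendability I would exhibit, for each letter $i\in\A_n$, the suffix $\Delta_i$ of $E_n$ together with the factorisation $\Delta_i i=\Omega_{i,1}\Omega_{i,2}\Omega_{i,3}$, and check directly from the construction that the three blocks are equal up to permutation; this makes $\Delta_i i$ an abelian cube, so completing the suffix $\Delta_i$ by appending $i$ creates a prohibition, and ranging over all $i$ shows that $E_n x$ fails to be cube-free for every $x\in\A_n$. The nesting $\Delta_1\subset\Delta_2\subset\cdots\subset\Delta_n=E_n$ built into the construction is exactly what organises this finite check. The harder half is avoidance: one must show that no factor of $E_n$ itself splits into three consecutive parts of equal multiset. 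I expect this abelian-cube-freeness to be the main obstacle, since it is a global statement about \emph{every} factor rather than a finite verification at the suffixes; the natural route is to exploit the sparseness and strictly controlled positions of the largest letter $n$ (which occurs only a handful of times) to force any hypothetical balanced three-block factor either to omit $n$ entirely, reducing to a smaller alphabet, or to place the $n$'s incompatibly with equal blocks. Rather than carry out this analysis here, I would prove the stronger statement for arbitrary $k$ and specialise: the cruciality of $E_n$ is precisely the $k=3$ instance of Theorem~\ref{opt-upper-k}, which I invoke to finish.
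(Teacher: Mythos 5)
Your proposal takes essentially the same route as the paper: the paper also exhibits $E_n$ as the witness (with the same block-multiset count giving $|E_n|=9n-13$ in the text preceding the theorem) and then defers the cruciality of $E_n$ to Theorem~\ref{opt-upper-k} with $k=3$, using the identification $E_n = D_{n,3}$ recorded in Remark~\ref{E_n}. The only cosmetic difference is that you leave that identification implicit when invoking Theorem~\ref{opt-upper-k}, whereas the paper cites the remark explicitly; otherwise the arguments coincide.
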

\begin{proof} See the proof of Theorem~\ref{opt-upper-k} where one needs to set $k = 3$ (in view of Remark~\ref{E_n}, later).
\end{proof}

\subsection{A lower bound for $\ell_3(n)$} \label{sub-l}

If $X=\Delta_n$ is a crucial word with respect to abelian cubes,
then clearly the number of occurrences of each letter except $n$ must be divisible by $3$, whereas the number of occurrences of $n$ is 2 modulo 3. We
sort in non-decreasing order the number of occurrences of the letters
$1,2,\ldots,n-1$ in $X$ to get a non-decreasing sequence of numbers $(a_1\leq a_2\leq
\ldots\leq a_{n-1})$. Notice that $a_i$ does not necessarily
correspond to the letter $i$. We denote by $a_0$ the number of
occurrences of the letter $n$. Also note that $a_0$ can be either larger or
smaller than $a_1$. By definitions, $|X|=\sum_{i=0}^{n-1}a_i$.

The word $E_n$ of length $9n-13$ in Subsection~\ref{sub-u} has the
following sequence of $a_i$'s: $(a_0,a_1,\ldots,
a_{n-1})=(5,3,6,9,\ldots, 9)$. In this subsection, we prove
that this sequence cannot be improved for $n\geq 5$, meaning that,
e.g., 5 cannot be replaced by 2, and/or 6 cannot be replaced by 3,
and/or 9('s) cannot be replaced by 3('s) or 6('s), no matter what construction we use to form a crucial word. This is a direct
corollary to the following four lemmas and is recorded in
Theorem~\ref{lower}. In the next four lemmas we use, without
explanation, the following facts that are easy to see from the
definitions. For any letter $x$ in a crucial abelian cube-free word
$X$:
\begin{itemize}
\item  the number of occurrences of $x$ in
$\Delta_x$ is 2 modulo 3 and the number of occurrences of any other letter, if any, in
$\Delta_x$ is divisible by 3;
\item if $x+1$ exists, then $Y_{x+1}$ in
$\Delta_{x+1}=Y_{x+1}\Delta_x$ contains 2 modulo 3 copies of $x+1$
and 1 modulo 3 occurrences of $x$, whereas the number of occurrences of any other letter, if any, in $Y_{x+1}$ is divisible by $3$.
\end{itemize}

Abusing notions, we think sometimes of words as sets, and use
``$\in$'' and ``$\subseteq$'' for ``occur(s)'' when the relative
order of letters is not important in the argument.

\begin{lemma}\label{lem1} For a crucial abelian cube-free word $X$, $|X|\geq 3$,
the sequence of $a_i$'s cannot contain 3,3. That is,
$(a_1,a_2)\neq(3,3)$.\end{lemma}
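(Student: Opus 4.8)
The plan is to argue by contradiction. Suppose the sorted sequence has $(a_1,a_2)=(3,3)$; after the renaming this says that two distinct letters, say $p<q$ with $p,q\le n-1$, each occur exactly three times in $X$. The first task is to localise all occurrences of $p$ and of $q$ using the two modular facts recorded just before the lemma. Since $\Delta_\ell$ carries $2\bmod 3$ copies of $\ell$ and a scarce letter has only three copies in total, $\Delta_\ell$ contains \emph{exactly} two copies of $\ell$ and $Y_{\ell+1}$ exactly one; moreover $\Delta_{\ell-1}$ is $\ell$-free. Applying this to $q$, I would record that all three $q$'s lie in the prefix $Y_{q+1}Y_q$ of $\Delta_{q+1}=Y_{q+1}Y_q\Delta_{q-1}$, while the complementary suffix $\Delta_{q-1}$ contains no $q$ at all. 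Applying it to $p$, I would record only the crude fact that $\Delta_p$ contains two $p$'s.

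The key step is to examine not $\Delta_q q$ but the abelian cube \emph{one level up}, namely $\Delta_{q+1}(q+1)=\Omega_{q+1,1}\Omega_{q+1,2}\Omega_{q+1,3}$ (which exists since $q+1\le n$). Because all three copies of $p$ and all three copies of $q$ already lie inside $\Delta_{q+1}$, and the three blocks are equal up to permutation, each block $\Omega_{q+1,j}$ contains \emph{exactly} one $p$ and exactly one $q$. The ``one $q$ per block'' requirement is the lever: the three $q$'s are confined to the prefix $Y_{q+1}Y_q$, so for the last block $\Omega_{q+1,3}$, which is a suffix of length $L:=|\Delta_{q+1}(q+1)|/3$, to contain its $q$, that prefix must reach into the last block, i.e.\ $|Y_{q+1}Y_q|>2L$.

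From here the contradiction comes out of a one-line length computation. If $|Y_{q+1}Y_q|\le 2L$, the prefix does not meet $\Omega_{q+1,3}$, so that block has no $q$, contradicting one-$q$-per-block. Otherwise $|Y_{q+1}Y_q|>2L$, which forces $|\Delta_{q-1}|=(3L-1)-|Y_{q+1}Y_q|<L$, so the suffix $\Delta_{q-1}$ sits entirely inside $\Omega_{q+1,3}$. But $\Delta_p\subseteq\Delta_{q-1}$ (since $p\le q-1$) and $\Delta_p$ carries two $p$'s, so $\Omega_{q+1,3}$ would contain at least two $p$'s, contradicting one-$p$-per-block. Either way the assumption collapses. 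I would stress that this single argument covers all sub-cases uniformly, whether $p=1$ or $p\ge 2$ and whether $q=p+1$ or $q\ge p+2$, which is exactly why I work with $\Delta_{q+1}(q+1)$ rather than $\Delta_q q$: at level $q$ the two non-appended $q$'s could plausibly split between the first two blocks, whereas passing up one level collapses all scarce occurrences into a single prefix that cannot service all three blocks.

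I expect the main obstacle to be the localisation step, that is, genuinely trapping the occurrences of the scarce letters inside the short windows $Y_{q+1}Y_q$ and $\Delta_p$, together with getting the inequality $|Y_{q+1}Y_q|>2L\Rightarrow|\Delta_{q-1}|<L$ exactly right so that $\Delta_{q-1}$ really is contained in the final block; once these are in place, the ``one letter per block'' constraint forces the contradiction with no further casework.
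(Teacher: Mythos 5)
Your proof is correct, and it shares the paper's basic skeleton: use the two modular facts to pin down all six occurrences of the two scarce letters, then contradict the requirement that each of the three blocks of an abelian cube contain exactly one copy of each scarce letter. The execution, however, differs in two genuine ways. The paper works with the cube $Xn=\Omega_{n,1}\Omega_{n,2}\Omega_{n,3}$ obtained by appending $n$ to the \emph{whole} word: writing $X=A_1A_2\Delta_x$ with $A_1=Y_n\cdots Y_{y+1}$, $A_2=Y_y\cdots Y_{x+1}$, it records the distribution $y\in A_1$, $\{x,y,y\}\subseteq A_2$, $\{x,x\}\subseteq\Delta_x$, and concludes with an ordering contradiction: since the third block must contain the rightmost $y$, it swallows both trailing $x$'s (the paper phrases this as ``no $x$ can exist between the two rightmost $y$'s''). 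You instead pass only one level above the larger scarce letter, using the cube $\Delta_{q+1}(q+1)$, and you replace the ordering argument by a length dichotomy: either the prefix $Y_{q+1}Y_q$ carrying all three $q$'s fails to reach the third block (so that block has no $q$), or else $|\Delta_{q-1}|<L$, trapping $\Delta_{q-1}\supseteq\Delta_p$, with its two $p$'s, inside the third block. Both routes are sound. What yours buys is robustness: it is completely insensitive to the internal order of letters within $Y_q$, which is exactly the delicate point (the case $q=p+1$, where the single $x$ of $A_2$ could sit among the $y$'s) that the paper's one-line parenthetical glosses over; the cost is a slightly longer setup. The paper's version is shorter and, once unpacked, covers the same cases, since its conclusion ultimately rests only on the two trailing $x$'s lying after the last $y$.
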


\begin{proof} 
Suppose that $x$ and $y$, with $x<y<n$, are two letters occurring in
$X$ exactly 3 times (each). We let $A_1=Y_nY_{n-1}\ldots Y_{y+1}$
and $A_2=Y_yY_{y-1}\ldots Y_{x+1}$ so that we have
$X=A_1A_2\Delta_x$. We must have the following distribution of $x$'s
and $y$'s: $y\in A_1$, $\{x,y,y\}\subseteq A_2$, and
$\{x,x\}\subseteq \Delta_x$. However, we get a contradiction, since
each of the blocks $\Omega_{n,2}$ and $\Omega'_{n,3}$  in
$X=\Delta_n=\Omega_{n,1}\Omega_{n,2}\Omega'_{n,3}$ must receive one
copy of $x$ and one copy of $y$, which is impossible (no $x$ can
exist between the two rightmost $y$'s).
\end{proof}

\begin{lemma} For a crucial abelian cube-free word $X$, $|X|\geq 4$, the sequence of $a_i$'s cannot contain 6,6,6.\end{lemma}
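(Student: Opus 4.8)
The plan is to mirror the method of Lemma~\ref{lem1}, but with the more delicate bookkeeping forced by having each block contain \emph{two} (rather than one) copy of each repeated letter. Suppose, for contradiction, that three letters $x<y<z<n$ each occur exactly $6$ times in $X$. Since $Xn=\Omega_{n,1}\Omega_{n,2}\Omega_{n,3}$ is an abelian cube, the three blocks are permutations of one another; in particular each of $\Omega_{n,1}$, $\Omega_{n,2}$, $\Omega'_{n,3}$ contains exactly two copies of each of $x$, $y$, and $z$.

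First I would localise the copies of $x,y,z$ within the nested suffixes $\Delta_x\subset\Delta_y\subset\Delta_z\subset X$, using the two modular facts recalled just before Lemma~\ref{lem1}. Writing $B=Y_y\cdots Y_{x+1}$, $C=Y_z\cdots Y_{y+1}$ and $D=Y_n\cdots Y_{z+1}$, so that $X=DCB\Delta_x$, these facts pin down, modulo $3$, the number of $x$'s, $y$'s and $z$'s in each of $\Delta_x$, $B$, $C$, $D$. Together with the constraint that each letter occurs exactly $6$ times, this already yields the generic distribution $\Delta_x\colon(x,y,z)=(2,0,0)$, $B\colon(1,2,0)$, $C\colon(0,1,2)$ and $D\colon(3,3,4)$, together with a short list of variants coming from the residue ambiguities ($2$ versus $5$ for a letter in its own $\Delta$-suffix, and $0$ versus $3$ versus $6$ for the smaller letters in the deeper suffixes).

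Next I would exploit the left-to-right order inside each gap: in $B=Y_y\cdots Y_{x+1}$ the two $y$'s sit in $Y_y$ and the $x$ in $Y_{x+1}$, so in $B$ all $y$'s precede the $x$; likewise in $C$ the $z$'s precede the $y$. Combined with the fact that $\Delta_x$ lies to the right of $B$, $C$ and $D$, this shows that the repeated letters accumulate with the smallest letter furthest to the right, so that reading $X$ from the right one meets the rightmost $x$'s, then the rightmost $y$'s, then the rightmost $z$'s. The contradiction is then the exact analogue of ``no $x$ between the two rightmost $y$'s'': the suffix $\Omega'_{n,3}$ must contain exactly two $z$'s, but the two rightmost $z$'s lie in $C$, so $\Omega'_{n,3}$ is forced to contain all of $CB\Delta_x$, which carries three $y$'s (the one in $C$ together with the two in $B$) --- one too many for a block that should hold exactly two. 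Hence no such $X$ exists.

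The main obstacle is the case analysis hidden in the ``short list of variants''. When a letter's count in its own $\Delta$-suffix takes the larger admissible value $5$, or when a smaller letter contributes $3$ rather than $0$ copies inside $\Delta_x$ (or inside $C$), its occurrences spread further to the right and the simple over-capture above must be re-examined. I expect each variant to succumb to the same principle --- choosing the boundary of $\Omega'_{n,3}$ that just captures the two rightmost copies of the \emph{largest} repeated letter and showing that this boundary necessarily swallows a third copy of a smaller letter --- but verifying that the block always over-captures, and that the relevant gap is nonempty so that the $Y_i$-ordering actually applies, is the part that will require care.
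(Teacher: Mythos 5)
Your generic-case argument is essentially sound (and can even be made ordering-free: once $\Omega'_{n,3}$ starts at or before the first $z$ of $C$ it contains all of $B\Delta_x$ and hence three $x$'s), but the proof has a genuine gap exactly where you flag it, and it is not a matter of routine care: your guiding principle --- let $\Omega'_{n,3}$ capture the two rightmost copies of the largest repeated letter and show it swallows a third copy of a smaller one --- provably fails on one of the deferred variants, and that variant is precisely the configuration the block constraints force, i.e.\ the heart of the lemma. Consider the distribution $\Delta_x\colon(x,y,z)=(2,0,0)$, $B\colon(1,2,3)$, $C\colon(3,4,2)$, $D\colon(0,0,1)$; all congruences are respected, since the $z$-count of $B$ need only be $0\bmod 3$. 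Take $y=x+1$, so that $B=Y_{x+1}$ is a single factor whose internal order is unconstrained, say $x\,z\,y\,z\,y\,z$. Then $\Omega'_{n,3}$ can begin just after the first $z$ of $B$ and contains exactly two $x$'s, two $y$'s and two $z$'s, and suitable orderings inside $C$ and $D$ make $\Omega_{n,1}$ and $\Omega_{n,2}$ come out right as well. So the top-level blocks of $X$ --- the only blocks your proposal ever uses --- yield no contradiction here. The paper kills this configuration by descending to the \emph{nested} cube $\Delta_y y$: there $\Delta_y=B\Delta_x$ carries two $y$'s, three $z$'s and three $x$'s, with every $z$ preceding the two rightmost $x$'s, so the last block of $\Delta_y y$, needing exactly one $z$, must contain the rightmost $z$ and therefore both $x$'s of $\Delta_x$ --- one too many. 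That is what the paper's appeal to Lemma~\ref{lem1} applied to $\Delta_y$ accomplishes, and your method has no counterpart to it.

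Two further differences are worth noting. First, the paper prunes most of your case list in one stroke with a ``useful fact'' proved by block counting: if $x<y$ both occur six times, then $\Delta_x$ cannot contain five $x$'s, because then $DCB$ would contain a single $x$, forcing $\Omega_{n,2}\Omega'_{n,3}\subseteq\Delta_x$, which would require four $y$'s inside $\Delta_x$ while it can hold at most three. This disposes of all ``$5$ in its own suffix'' variants at once; you would need some analogue before your enumeration becomes manageable. Second, the paper never uses the positional order of the $Y_i$'s (your ordering claims, which you correctly note break down when $y=x+1$ or $z=y+1$); instead it extracts all order information from block counts --- for example, every $x$ of $B$ must precede both $y$'s of $B$ because $\Omega'_{n,3}$ must contain those two $y$'s yet already owns its two $x$'s from $\Delta_x$ --- which is why the adjacency issue never arises in its proof.
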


\begin{proof} We first prove the following fact. \\

\noindent {\bf A useful fact.} If $x$ and $y$, with $x<y<n$, are two
letters occurring in $X$ exactly 6 times (each), then $\Delta_x$
{\em cannot} contain 5 copies of $x$. Indeed, if this were the
case, then assuming $A_1=Y_nY_{n-1}\ldots Y_{y+1}$ and
$A_2=Y_yY_{y-1}\ldots Y_{x+1}$ giving $X=A_1A_2\Delta_x$, we have
that $A_1A_2$ has exactly one $x$ and at least three $y$'s,
contradicting the fact that each of the blocks $\Omega_{n,1}$, $\Omega_{n,2}$,
and $\Omega'_{n,3}$ must receive two $x$'s and two $y$'s. \\

Suppose that $x$, $y$, and $z$, with $x<y<z<n$, are three letters
occurring in $X$ exactly 6 times (each). We let
$A_1=Y_nY_{n-1}\ldots Y_{z+1}$, $A_2=Y_zY_{z-1}\ldots Y_{y+1}$, and
$A_3=Y_yY_{y-1}\ldots Y_{x+1}$ so that $X=A_1A_2A_3\Delta_x$. The
minimal requirements on the $A_i$ are as follows: $z\in A_1$,
$\{z,z,y\}\subseteq A_2$, $\{y,y,x\}\subseteq A_3$. Moreover, using
the useful fact above applied to $x$ and $y$, $\Delta_x$ contains
{\em exactly} two copies of $x$. The useful fact applied to $y$ and
$z$ guarantees that $A_1A_2$ contains 4 $y$'s (in particular,
$\Delta_x$ does not contain any $y$'s).

Looking at $X=\Delta_n=\Omega_{n,1}\Omega_{n,2}\Omega'_{n,3}$, we
see that each of the blocks $\Omega_{n,1}$, $\Omega_{n,2}$, and $\Omega'_{n,3}$
must receive 2 $x$'s, 2 $y$'s, and 2 $z$'s. Thus, in $A_3$, we must
have the following order of letters: $x,y,y$ and the boundary
between $\Omega_{n,2}$ and $\Omega'_{n,3}$ must be between $x$ and
$y$ in $A_3$. But then $\Delta_x$ entirely belongs to
$\Omega'_{n,3}$, so it cannot contain any $z$'s (if it would do so,
$\Delta_x$ would then contain 3 $z$'s which is impossible). On the
other hand, (exactly) 3 $z$'s must be in $A_3$ for $\Omega'_{n,3}$
to receive 2 $z$'s. Thus, $\Delta_y$ contains 2 $y$'s, 3 $z$'s and 3
$x$'s which is impossible by Lemma~\ref{lem1} applied to the word
$\Delta_y$ with two letters occurring exactly 3 times
(alt ernatively, one can see, due to the considerations above, that
no $z$ can be between the two rightmost $x$'s contradicting the
structure of $\Delta_y$).
\end{proof}

\begin{lemma} For a crucial abelian cube-free word $X$, $|X|\geq 4$, the sequence of $a_i$'s cannot contain 3,6,6.\end{lemma}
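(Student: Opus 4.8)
The plan is to assume, for contradiction, that the sorted sequence contains the values $3,6,6$, and to exploit the three–block structure $Xn=\Delta_n n=\Omega_{n,1}\Omega_{n,2}\Omega_{n,3}$ of the crucial word $X=\Delta_n$. First I would reduce to a clean combinatorial situation: if the sequence contains $3,6,6$ then by Lemma~\ref{lem1} it cannot contain $3,3$, and by the previous lemma it cannot contain $6,6,6$; hence there is \emph{exactly} one letter $t$ of multiplicity $3$ and \emph{exactly} two letters of multiplicity $6$. Since the blocks are equal up to permutation, each of $\Omega_{n,1},\Omega_{n,2},\Omega'_{n,3}$ receives one copy of $t$ and two copies of each count-$6$ letter. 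Using the bullet facts, the three occurrences of $t$ sit one in $Y_{t+1}$ and two in $Y_t$; consequently the first block boundary lies to the left of the first $t$ inside $Y_t$, while the boundary $\gamma$ between $\Omega_{n,2}$ and $\Omega'_{n,3}$ lies strictly inside $Y_t$.

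The core of the argument is then a one-sided counting dichotomy. If some count-$6$ letter $s$ satisfies $s\le t-2$, then $\Delta_{s+1}=Y_{s+1}\Delta_s$ lies entirely to the right of $Y_t$, hence inside $\Omega'_{n,3}$; as $Y_{s+1}$ contains at least one $s$ and $\Delta_s$ at least two, $\Omega'_{n,3}$ contains at least three copies of $s$, contradicting its budget of two. Symmetrically, if some count-$6$ letter $s$ satisfies $s\ge t+2$, then $Y_s$ and $Y_{s+1}$ lie to the left of the first block boundary, so $\Omega_{n,1}$ contains at least $2+1=3$ copies of $s$, again a contradiction. Because these are \emph{lower} bounds, they are insensitive to any further occurrences of $s$ elsewhere in the word, which is what makes the step robust.

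Since the two count-$6$ letters are distinct and different from $t$, this dichotomy fails only when both lie in $\{t-1,t+1\}$, i.e.\ the sole surviving configuration is three consecutive letters $t-1,t,t+1$ with multiplicities $6,3,6$. Here $\Omega'_{n,3}\subseteq\Delta_t$, so the requirement $\Omega'_{n,3}(t+1)=2$ together with $\Delta_t(t+1)\le\Delta_{t+1}(t+1)$ forces $\Delta_t(t+1)\ge 3$, whence $\Delta_{t+1}(t+1)=5$ and $\Delta_t(t+1)=3$: three copies of the \emph{larger} letter $t+1$ are trapped inside $\Delta_t$. I would then pass to the abelian cube $\Delta_{t+1}(t+1)$: from $\Delta_t(t)=2$ and the total multiplicity $3$ one gets $\Delta_{t+1}(t)=3$, so $t$ occurs exactly three times in $\Delta_{t+1}$, while $\Delta_{t+1}(t-1)\in\{3,6\}$. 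When $\Delta_{t+1}(t-1)=3$, both $t-1$ and $t$ occur exactly three times in $\Delta_{t+1}$, and applying Lemma~\ref{lem1} to $\Delta_{t+1}(t+1)$ (exactly as the previous lemma applies Lemma~\ref{lem1} to a shorter $\Delta_y$) gives a contradiction.

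The hard part will be the residual sub-case $\Delta_{t+1}(t-1)=6$, in which $t-1$ accumulates entirely inside $\Delta_{t+1}$ and neither Lemma~\ref{lem1} nor the distance-$\ge 2$ estimates apply, precisely because the intruding letter $t+1$ is \emph{larger} than the nominal top $t$ of $\Delta_t$. I expect to close it by a short separate analysis of the internal order of $Y_t$—tracking the occurrences of $t-1$ relative to $\gamma$ and to the first $t$ of $Y_t$ so as to push either $\Omega_{n,1}$ or $\Omega'_{n,3}$ off its budget of two copies of $t-1$—or, failing a direct argument, by a descent on $|\Delta_{t+1}|$. In short, the generic estimates of the second paragraph are immediate and the degenerate configuration reduces to Lemma~\ref{lem1} in its main sub-case; the entire difficulty, and the main risk of a gap, is concentrated in ruling out these near-adjacent $6,3,6$ patterns.
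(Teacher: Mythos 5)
Your organization is genuinely different from the paper's, and the parts you actually prove are correct. The paper splits into three cases according to the relative order of the multiplicity-$3$ letter and the two multiplicity-$6$ letters; your distance dichotomy (both halves of which check out: in the second one $Y_{s+1}$ carries at least one $s$ and $Y_s$ at least two, by the mod-$3$ facts) subsumes the paper's Cases 1 and 2 and shrinks its Case 3 to the single adjacent pattern $t-1,t,t+1$ with multiplicities $6,3,6$. Your handling of the sub-case where $\Delta_{t+1}$ contains exactly three copies of $t-1$ is also legitimate: running the argument of Lemma~\ref{lem1} inside the cube $\Delta_{t+1}(t+1)$, where $t-1<t<t+1$ and both $t-1$ and $t$ occur exactly three times in $\Delta_{t+1}$, is precisely the move the paper itself makes when it invokes Lemma~\ref{lem1} for $\Delta_y$ in its $6,6,6$ lemma.

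The residual sub-case (all six copies of $t-1$ inside $\Delta_{t+1}$) is, however, a genuine gap, and the plan you sketch for it would provably fail. By the mod-$3$ facts the numbers of $(t-1)$'s in $(Y_{t+1},Y_t,\Delta_{t-1})$ must be $(0,1,5)$, $(0,4,2)$ or $(3,1,2)$, and, as you showed, $\Delta_t$ carries exactly three $(t+1)$'s, lying either all in $Y_t$ or all in $\Delta_{t-1}$. Outer-block budgets do kill most combinations: if the three $(t+1)$'s lie in $\Delta_{t-1}$, then $\Omega'_{n,3}\supseteq\Delta_{t-1}$ holds three $(t+1)$'s against a budget of two; and in the $(0,\ast,\ast)$ cases $\Omega_{n,1}$ cannot meet its $(t-1)$-budget without swallowing all of $Y_{t+1}$ and thereby exceeding its $(t+1)$-budget. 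But the combination $(3,1,2)$ with the three $(t+1)$'s in $Y_t$ satisfies \emph{every} outer budget simultaneously: order $Y_{t+1}$ as $t,(t+1),(t-1),(t-1)\mid(t-1),(t+1)$ and $Y_t$ as $t,(t+1),(t-1)\mid(t+1),t,(t+1)$, the bars marking the block boundaries; then each of $\Omega_{n,1},\Omega_{n,2},\Omega'_{n,3}$ receives exactly one $t$, two $(t+1)$'s and two $(t-1)$'s. So no argument that only counts letters inside the three outer blocks --- in particular not your plan of pushing $\Omega_{n,1}$ or $\Omega'_{n,3}$ over its budget of two $(t-1)$'s --- can reach a contradiction here. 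What does kill this configuration is the \emph{inner} cube $\Delta_t\,t$: its three blocks must each contain exactly one $t$, one $t-1$ and one $t+1$; its last block must contain the last $(t+1)$, which lies in $Y_t$, hence it contains all of $\Delta_{t-1}$ and with it two copies of $t-1$, a contradiction. This appeal to the structure of the cube $\Delta_x x$ for the middle letter is exactly how the paper's Case 3 finishes (``the structure of $\Delta_x$''), and it is the idea missing from your outline; your fallback of ``descent on $|\Delta_{t+1}|$'' is too vague to evaluate, not least because $\Delta_{t+1}$ need not itself be a crucial word.
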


\begin{proof} Suppose that $x$ occurs exactly 3 times and $y$ and $z$ occur exactly 6 times (each) in $X$. We consider three
cases covering all the possibilities up to renaming $y$ and $z$.

\begin{description}
\item[Case 1] $z<y<x<n$. One can see that $\Delta_y$ does not contain $x$,
but it contains at least 3 $z$'s contradicting the fact that each of
$\Omega_{n,1}$, $\Omega_{n,2}$, and $\Omega'_{n,3}$ must receive 1
$x$ and 2 $z$'s. \\
\item[Case 2] $x<z<y<n$. We let
$A=Y_nY_{n-1}\ldots Y_{z+1}$ so that $X=A\Delta_z$. All three $x$'s
must be in $\Delta_z$, while $A$ must contain at least 3 $y$'s
contradicting the fact that each of the blocks $\Omega_{n,1}$, $\Omega_{n,2}$,
and $\Omega'_{n,3}$ must receive 1 $x$ and 2 $y$'s. \\
\item[Case 3] $z<x<y<n$. We let
$A_1=Y_nY_{n-1}\ldots Y_{y+1}$, $A_2=Y_yY_{y-1}\ldots Y_{x+1}$, and
$A_3=Y_xY_{x-1}\ldots Y_{z+1}$ so that $X=A_1A_2A_3\Delta_z$. The
minimal requirements on the $A_i$ and $\Delta_z$ are as follows: $y\in
A_1$, $\{x,y,y\}\subseteq A_2$, $\{z,x,x\}\subseteq A_3$, and
$\{z,z\}\subseteq\Delta_z$. The remaining 3 $y$'s cannot be in
$\Delta_z$ so as not to contradict the structure of $\Delta_x$ (it
would not be possible to distribute $x$'s and $y$'s in a proper
way). However, if the remaining 3 $y$'s are in $A_3$ then, not to
contradict the structure of $\Delta_x$ (no proper distribution of
$y$'s and $z$'s would exist), the remaining 3 $z$'s must be in
$\Delta_x$, which contradicts to the structure of
$X=\Delta_n=\Omega_{n,1}\Omega_{n,2}\Omega'_{n,3}$ (no proper
distribution of $y$'s and $z$'s would exist among the blocks $\Omega_{n,1}$,
$\Omega_{n,2}$, and $\Omega'_{n,3}$, each of which is supposed to
have exactly 2 copies of $y$ and 2 copies of $z$). Thus, there are
no $y$'s in $\Delta_x$, contradicting the structure of $\Delta_n$ (no
proper distribution of $y$'s and $x$'s would exist among the blocks
$\Omega_{n,1}$,$\Omega_{n,2}$, and $\Omega'_{n,3}$).
\end{description}\end{proof}

%
%

\begin{lemma}\label{lem4} For a crucial abelian cube-free word $X$, $|X|\geq 5$, $$(a_0,a_1,a_2,a_3,a_4)\neq (2,3,6,9,9).$$\end{lemma}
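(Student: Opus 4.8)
The plan is to sidestep the case analysis of the three preceding lemmas by exploiting how scarce the letter $5$ is, and then reducing to the already-known value $\ell_3(4)=20$. So suppose for contradiction that a crucial abelian cube-free word $X=\Delta_5$ over $\A_5$ realises $(a_0,a_1,a_2,a_3,a_4)=(2,3,6,9,9)$; then $|X|=2+3+6+9+9=29$. First I would record the global block structure. Since $\Delta_55=\Omega_{5,1}\Omega_{5,2}\Omega_{5,3}$ is an abelian cube containing $a_0+1=3$ copies of $5$, the three blocks share a common multiset of $(|X|+1)/3=10$ letters, so each $\Omega_{5,j}$ contains exactly one $5$. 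As $\Omega_{5,3}=\Omega'_{5,3}5$, that $5$ is the appended letter; hence in $X=\Omega_{5,1}\Omega_{5,2}\Omega'_{5,3}$ the two $5$'s of $X$ lie one in $\Omega_{5,1}$ and one in $\Omega_{5,2}$, while $\Omega'_{5,3}$ contains no $5$.

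Next I would pin down the prefix $Y_5$ in $X=Y_5\Delta_4$. By the first of the two displayed facts, the number of $5$'s in $\Delta_4$ is divisible by $3$; since there are only two $5$'s in all of $X$, $\Delta_4$ contains none and both $5$'s sit in $Y_5$. The second $5$ lies in $\Omega_{5,2}$, that is, at a position exceeding $|\Omega_{5,1}|=10$, so the prefix $Y_5$ reaching it must have length at least $11$. Consequently $|\Delta_4|=|X|-|Y_5|\le 29-11=18$.

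On the other hand I would show that $\Delta_4$ is itself a crucial abelian cube-free word over $\A_4$. It is abelian cube-free, being a factor of $X$, and $\Delta_44$ is an abelian cube by the definition of $\Delta_4$; moreover, for each $j<4$ the nesting $\Delta_j\subset\Delta_4$ makes $\Delta_jj$ a suffix of $\Delta_4j$, so $\Delta_4$ cannot be extended to the right by any letter of $\A_4$. Because $\Delta_4$ contains at least two copies of $4$ and, through the chain $\Delta_1\subset\cdots\subset\Delta_4$, at least two copies of every smaller letter, it is genuinely a word over $\A_4$, and therefore $|\Delta_4|\ge\ell_3(4)=20$. This contradicts $|\Delta_4|\le 18$, completing the proof.

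The one point requiring care is the reduction: I must verify that $\Delta_4$ is crucial over the \emph{full} alphabet $\A_4$ (not a sub-alphabet) and that invoking $\ell_3(4)=20$ is not circular. The latter is fine because the bound $\ell_3(4)\ge 20$ rests only on the three preceding lemmas, which rule out the low-letter patterns $3,3$, then $6,6,6$, and then $3,6,6$, and thereby force the minimal count sequence over $\A_4$ to be $(2;3,6,9)$ of total $20$, independently of the present statement. Everything else---the block length $|\Omega_{5,j}|=10$ and the placement of the two $5$'s---is routine bookkeeping.
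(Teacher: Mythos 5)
Your argument is sound for a five-letter alphabet, and there it is genuinely different from the paper's proof: the placement of the two $5$'s, the bound $|\Delta_4|\leq 18$, the cruciality of $\Delta_4$ over $\A_4$, and the non-circular derivation of $\ell_3(4)\geq 20$ from the three preceding lemmas are all correct. The genuine gap is that the lemma is not a statement about $n=5$ only: $X$ is a crucial word over $\A_n$ for an \emph{arbitrary} $n\geq 5$, and the hypothesis $(a_0,\ldots,a_4)=(2,3,6,9,9)$ constrains only the count of the letter $n$ and the four smallest counts among the remaining letters, saying nothing about $a_5,\ldots,a_{n-1}$. It is in this generality that the lemma is invoked in the proofs of Theorem~\ref{lower} (ruling out $a_0=2$ for every $n\geq 5$) and Theorem~\ref{rough-l-bound}. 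Your proof uses $n=5$ essentially, not cosmetically: only for $n=5$ does the hypothesis determine $|X|=29$, which is what makes $|\Omega_{5,1}|=10$ and the final comparison with $\ell_3(4)=20$ possible.

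For $n\geq 6$ the length count collapses, because $|X|=29+\sum_{i\geq 5}a_i$ is unbounded. Your inequality becomes $|\Delta_{n-1}|\leq |X|-|\Omega_{n,1}|-1=(2|X|-4)/3$, and this need not fall below $\ell_3(n-1)$: already for $n=6$ with counts $(2,3,6,9,9,21)$ one gets $|X|=50$ and the bound $|\Delta_5|\leq 32=\ell_3(5)$, which is no contradiction at all, and larger $a_5$ makes the inequality point the wrong way. (There is also a latent circularity in the general case, since $\ell_3(n-1)\geq 9(n-1)-13$ for $n-1\geq 5$ itself rests on the present lemma; that could be organised as a strong induction on $n$, but the length estimate still fails.) The paper instead argues uniformly in $n$, with no lengths at all: it first shows the letter occurring three times must be $n-1$, then derives a contradiction from how the letters with counts $6,9,9$ can be distributed among $\Omega_{n,1}$, $\Omega_{n,2}$, $\Omega'_{n,3}$. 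If you want to save your reduction to $\Delta_{n-1}$, replace lengths by letter counts: your own observation that $\Omega_{n,1}$ is a proper prefix of $Y_n$ shows $\Delta_{n-1}$ is a proper suffix of $\Omega_{n,2}\Omega'_{n,3}$; this forces the count-$3$ letter of $X$ to be $n-1$, and then forces the letters of counts $6,9,9$ to occur exactly $3,6,6$ times in $\Delta_{n-1}$ (a second letter of count $3$ being excluded by Lemma~\ref{lem1} applied to $\Delta_{n-1}$), so the crucial word $\Delta_{n-1}$ violates the lemma excluding the pattern $3,6,6$. That repaired argument works for every $n\geq 5$.
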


\begin{proof}
Suppose that $n$ occurs twice in $X$ and assume that a letter $t$ occurs exactly 3 times. If $t\neq n-1$, then
all three occurrences of $t$ are in $\Delta_{n-1}$ whereas the two
occurrences of $n$ are in $Y_n$ (recall that
$X=\Delta_n=Y_n\Delta_{n-1})$. This contradicts the fact
that $\Omega_{n,1}$ must contain 1 copy of $n$ and 1 copy of $t$.
Thus, the letter $n-1$ occurs exactly 3 times.

Now, assuming $x$, $y$, and $z$, with $x<y<z<n-1$, are three letters
occurring in $X$ $\{6,9,9\}$ times (we do not specify which letter
occurs how many times), we have, similar to the proof of
Lemma~\ref{lem4}, that $\Delta_z$ entirely belongs to
$\Omega'_{n,3}$. Moreover, the block $\Omega'_{n,3}$ has 2,3,3 occurrences of letters
$x,y,z$ (in some order). However, if $x$ or $y$ occur twice in
$\Omega'_{n,3}$, they occur twice in $\Delta_z$ leading to a
contradiction with $\Delta_z$'s structure. Thus $z$ must occur twice in $\Omega'_{n,3}$, and $x$ and $y$ occur 3 times (each) in
$\Omega'_{n,3}$. But then it is clear that $x$ and $y$ must occur 3
times (each) in $\Delta_z$, contradicting the fact
that $x$ and $z$ cannot be distributed properly in $\Delta_z$.
\end{proof}

\begin{theorem}\label{lower} For $n\geq 5$, we have $\ell_3(n)\geq 9n-13$. \end{theorem}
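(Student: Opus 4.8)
The plan is to recast the bound as a discrete optimization over the multiset of letter-occurrence counts and then read off the minimum from the four preceding lemmas. Recall that for any crucial abelian cube-free word $X = \Delta_n$ every letter of $\A_n$ occurs: each of $1, \ldots, n-1$ occurs a positive multiple of $3$ times, so $a_i \geq 3$ for $1 \leq i \leq n-1$, while $n$ occurs a positive number $\equiv 2 \pmod 3$ of times, so $a_0 \in \{2, 5, 8, \ldots\}$. Since $|X| = a_0 + \sum_{i=1}^{n-1} a_i$, I would bound $S := \sum_{i=1}^{n-1} a_i$ from below and then handle $a_0$ by a short case split.

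First I would pin down $\min S$ using the first three lemmas. Each $a_i$ with $i \geq 1$ lies in $\{3, 6, 9, 12, \ldots\}$; letting $p$ be the number of indices with $a_i = 3$ and $q$ the number with $a_i = 6$, Lemma~\ref{lem1} forces $p \leq 1$, the lemma excluding $6,6,6$ forces $q \leq 2$, and the lemma excluding $3,6,6$ rules out $(p,q) = (1,2)$. As all remaining $a_i$ are at least $9$,
$$S \;\geq\; 3p + 6q + 9(n - 1 - p - q) \;=\; 9(n-1) - (6p + 3q),$$
and maximizing $6p + 3q$ under these constraints gives the unique optimum $(p,q) = (1,1)$ with $6p + 3q = 9$. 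Hence $S \geq 9n - 18$, and equality forces the sorted sequence $(a_1, \ldots, a_{n-1}) = (3, 6, 9, \ldots, 9)$.

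Next I would split on $a_0$. If $a_0 \geq 5$, then $|X| = a_0 + S \geq 5 + (9n - 18) = 9n - 13$ at once. If $a_0 = 2$, then the extremal case $S = 9n - 18$ would force $(a_0, a_1, a_2, a_3, a_4) = (2, 3, 6, 9, 9)$, using $n \geq 5$ to ensure $a_1, \ldots, a_4$ exist; but this tuple is exactly the one excluded by Lemma~\ref{lem4}. Thus $S \neq 9n - 18$, and since $S$ is a multiple of $3$ with $S \geq 9n - 18$ we obtain $S \geq 9n - 15$, whence $|X| \geq 2 + (9n - 15) = 9n - 13$. In either case $|X| \geq 9n - 13$, so, as $X$ is arbitrary, $\ell_3(n) \geq 9n - 13$.

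Once the lemmas are granted, the argument is essentially bookkeeping, so the only delicate point --- and the reason Lemma~\ref{lem4} is needed at all --- is the boundary case $a_0 = 2$: without it the counting yields only $|X| \geq 9n - 16$. The crucial feature is that both cases $a_0 = 2$ and $a_0 \geq 5$ bottom out at exactly $9n - 13$, matching the length of $E_n$. I would take care to verify the uniqueness of the minimizer of $S$, since it is precisely this uniqueness that lets Lemma~\ref{lem4}, which forbids only a single tuple, upgrade the bound by the required $3$ in the $a_0 = 2$ case.
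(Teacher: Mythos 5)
Your proof is correct and follows the same route as the paper: the paper's proof of Theorem~\ref{lower} simply declares the bound a ``direct corollary'' of Lemmas~\ref{lem1}--\ref{lem4}, and your optimization over $(p,q)$ together with the $a_0$ case split is precisely the bookkeeping that claim leaves implicit. Your write-up is in fact more careful than the paper's, correctly isolating why Lemma~\ref{lem4} is needed exactly when $a_0=2$ and why uniqueness of the minimizer $(p,q)=(1,1)$ makes a single forbidden tuple suffice.
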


\begin{proof} This is a direct corollary to the preceding four lemmas, which tell us that any attempt to decrease
numbers in the sequence $(5,3,6,9,9,\ldots)$ corresponding to $E_n$
will lead to a prohibited configuration. \end{proof}

\begin{corollary} \label{min-length} For $n \geq 5$, we have $\ell_{3}(n) = 9n-13$.
\end{corollary}
\begin{proof} The result follows immediately from Theorems~\ref{opt-upper} and \ref{lower}.
\end{proof}

\begin{remark} Recall from Section~\ref{sub-u} that $\ell_3(n) = 2, 5, 11, 20$ for $n = 1,2,3,4$, respectively. For instance, the word $4 2 1 3 1 2 1 4 2 3 1 2 1 1 3 2 1 2 1 1$  is an optimal abelian cube-free crucial word of length $20$ ($= 2 + 3 + 6 + 9$). This can be proved using similar arguments as in the proofs of the Lemmas~\ref{lem1}--\ref{lem4}.
\end{remark}

\section{Crucial words for abelian $k$-th powers}\label{general}

\subsection{An upper bound for $\ell_k(n)$ and a conjecture}

The following theorem is a direct generalisation of
Theorem~\ref{cubes} and is a natural approach to obtaining an upper bound that
improves $k^n-1$ given by the $k$-generalised Zimin word $Z_n^k$.

\begin{theorem}\label{abcubes} For $k\geq 3$, we have $\ell_k(n)\leq
k\cdot(k-1)^{n-1}-1$.\end{theorem}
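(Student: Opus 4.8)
The plan is to generalise the iterative construction from Theorem~\ref{cubes} directly, replacing the auxiliary letter $1$ (which was inserted $k-1=2$ times in the cube case) by $k-1$ copies at each step. Concretely, I would set $X_1 = 1^k = \underbrace{11\ldots 1}_{k}$ and, assuming $X_{n-1}$ has been built over $\A_{n-1}$, first increase every letter by $1$ to get $X^+_{n-1}$ over $\{2,3,\ldots,n\}$, then insert $k-1$ copies of the letter $1$ after each letter of $X^+_{n-1}$ and append one extra block of $(k-1)$ ones at the right to obtain $X_n$. The length recursion is $|X_n| = k\cdot|X_{n-1}| + (k-1)$ with $|X_1| = k$; solving this linear recurrence (e.g. by checking that $|X_n| = k\cdot(k-1)^{n-1} - 1$ satisfies it, since $k(k(k-1)^{n-2}-1)+(k-1) = k(k-1)^{n-1} - 1$) gives exactly the claimed length.

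The verification then splits into the two standard parts of crucialness. First I would argue, by induction on $n$, that $X_n$ avoids abelian $k$-th powers: if $X_n$ contained an abelian $k$-th power, then deleting all the inserted $1$'s (which form the letter set between consecutive letters of $X^+_{n-1}$ in a uniform way) and decreasing the remaining letters by $1$ would produce an abelian $k$-th power in $X_{n-1}$, contradicting the inductive hypothesis --- and the base case $X_1 = 1^k$ plainly avoids abelian $k$-th powers since it is a single run of length exactly $k$ containing no factor $X_1\ldots X_k$ with $|X_1|\geq 1$ fitting strictly inside. Second, I would show that $X_n x$ creates an abelian $k$-th power for every $x\in\A_n$. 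If $x = 1$, the rightmost block becomes $1^k$, an abelian (indeed trivial) $k$-th power. If $x > 1$, I would swap the rightmost $1$ with the rightmost $x$ in $X_n x$ so that the tail reads with every other position a $1$; stripping all $1$'s and decreasing by $1$ yields the word $X_{n-1}(x-1)$, which contains an abelian $k$-th power by the induction hypothesis (since $X_{n-1}$ is itself crucial over $\A_{n-1}$). This mirrors the cube proof almost verbatim, with $k-1$ in place of $2$.

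The main obstacle I anticipate is making the two reduction arguments fully rigorous for general $k$, specifically the claim that deleting the inserted $1$'s and shifting letters is a clean bijection that preserves and reflects the abelian $k$-th-power property. In the square/cube case this is visually obvious, but for general $k$ one must be careful that (i) the $1$'s are inserted in blocks of exactly $k-1$ so that removing them from any factor keeps the remaining letters' multiplicities in each of the $k$ purported permutation-blocks consistent, and (ii) the swap of the rightmost $1$ and rightmost $x$ genuinely realigns the suffix into the ``every other letter is $1$'' pattern that the reduction needs. I would therefore state as a small lemma the structural fact that, between any two consecutive non-$1$ letters of $X_n$, there sit exactly $k-1$ copies of $1$ (and $k-1$ at the very end), and I would use this uniform spacing to justify that any abelian-power alignment in $X_n$ descends to one in $X_{n-1}$. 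Once this spacing invariant is pinned down, both the avoidance and the extension claims follow by the same counting of letter multiplicities modulo the block structure that was used implicitly in Theorem~\ref{cubes}.
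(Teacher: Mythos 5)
Your construction is not the one the paper uses, and as written it fails before the verification step even begins. The paper's proof of Theorem~\ref{abcubes} generalises Theorem~\ref{cubes} by inserting $k-2$ copies of $1$ after each letter of $X^+_{n-1}$ (plus one extra $1$ at the right end), with base word $X_1 = 1^{k-1}$; note that in the cube case $k-2=1$, which is what Theorem~\ref{cubes} actually does. You instead insert $k-1$ copies after each letter and append a further block of $k-1$ ones, starting from $X_1 = 1^k$. This is fatal twice over: first, $X_1 = 1^k$ is itself a (trivial) abelian $k$-th power, so it does not avoid the prohibitions (the crucial word over a one-letter alphabet is $1^{k-1}$, not $1^k$ --- a word is a factor of itself, so nothing needs to ``fit strictly inside''); second, the right end of your $X_n$ carries the $k-1$ ones inserted after the last letter of $X^+_{n-1}$ immediately followed by the appended block of $k-1$ ones, i.e.\ a run $1^{2k-2}$, and since $2k-2 \geq k+1$ for $k \geq 3$, your $X_n$ contains the trivial $k$-th power $1^k$. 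So $X_n$ is not abelian $k$-power-free, hence not crucial. Your ``spacing lemma'' claiming that the cube construction has $k-1=2$ ones between consecutive non-$1$ letters is the source of the error: in $X_2 = 21211$ there is exactly \emph{one} $1$ between the two $2$'s.

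The length bookkeeping is wrong as well, and the slip is in the very identity you offer as a check: with your recursion $|X_n| = k|X_{n-1}| + (k-1)$ and $|X_1|=k$ one gets
\[
k\bigl(k(k-1)^{n-2}-1\bigr)+(k-1) \;=\; k^2(k-1)^{n-2}-1 \;\neq\; k(k-1)(k-1)^{n-2}-1 \;=\; k(k-1)^{n-1}-1,
\]
so $k(k-1)^{n-1}-1$ does \emph{not} satisfy your recurrence; its actual solution is $|X_n| = k^n + k^{n-1}-1$, which is worse even than the bound $k^n-1$ given by the $k$-generalised Zimin word $Z_n^k$ that the theorem is supposed to improve. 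The exponential base $k-1$ in the claimed bound comes precisely from each letter of $X_{n-1}$ expanding to $k-1$ letters, i.e.\ from inserting $k-2$ ones per letter as in the paper; it also keeps the terminal run of ones at length $k-1$, so that $X_n$ avoids $1^k$ while $X_n1$ ends with $1^k$. Your closing concern --- that the deletion/swap reduction needs care for general $k$ --- is a legitimate one (the paper is very terse on exactly this point), but it cannot be addressed until the insertion counts, the base case, and the length computation are corrected to match the paper's construction.
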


\begin{proof}
We proceed as in the proof of Theorem~\ref{cubes}, with the only
difference being that we put $k-2$ 1's to the right of each letter and one
extra 1 as the rightmost one. \end{proof}

We skip here analysis of the work of a greedy algorithm, and proceed
directly with the construction of a crucial abelian $k$-power-free word $W_{n,k}$ before describing
the construction of a similar word $D_{n,k}$ that we believe to be
optimal.

\medskip
\noindent {\bf A construction of a crucial abelian $k$-power-free word $W_{n,k}$, where $n\geq 4$ and $k\geq 3$.} We illustrate
each step of the algorithm by example, letting $k=n=4$.

The construction can be explained directly, but we introduce it
recursively, obtaining, for $n\geq 4$, $W_{n,k}$ from $W_{n,k-1}$
and using the abelian cube case $W_{n,3}=W_n$ as the basis. For
$n=4$,
$$W_{4,3}=\Omega_{4,1}\Omega_{4,2}\Omega'_{4,3}=3 4 4 2 3 3 1 2 2\  4 3 3 2 2 1 4 3 2\  3 2 1 2 2 3 3 4.$$
Assume that $W_{n,k-1}=\Omega_{n,1}\Omega_{n,2}\ldots
\Omega'_{n,k-1}$ is constructed and implement the following steps
to obtain $W_{n,k}$:

\begin{enumerate}

\item Duplicate $\Omega_{n,1}$ in $W_{n,k-1}$ to obtain the word $$W'_{n,k-1}=\Omega_{n,1}\Omega_{n,1}\Omega_{n,2}\ldots
\Omega'_{n,k-1}.$$ For $n=k=4$, $$W'_{4,3}=3 4 4 2 3 3 1 2 2\ 3 4
4 2 3 3 1 2 2\  4 3 3 2 2 1 4 3 2\  3 2 1 2 2 3 3 4$$

\item Append to the second $\Omega_{n,1}$ in $W'_{n,k}$ the factor $n(n-1)\ldots 2$ (in our example, 432) to obtain $\Omega_{n,2}$ in $W_{n,k}$,
and in each of the remaining blocks $\Omega_{n,i}$ in
$W'_{n,k-1}$, duplicate the rightmost occurrence of each letter $x$,
where $2\leq x\leq n$, to obtain $W_{n,k}$. For $n=k=4$,
$$W_{4,4}=3 4 4 4 2 3 3 3 1 2 2 2\ 3 4 4 2 3 3 1 2 2 4 3 2\
4 3 3 2 2 1 4 4 3 3 2 2\ 3 2 1 2 2 2 3 3 3 4 4.$$
\end{enumerate}

We provide two more examples here, namely $W_{5,4}$ and $W_{4,5}$,
respectively, so that the reader can check their understanding
of the construction:
\begin{small}
$$4 5 5 5 3 4 4 4 2 3 3 3 1 2 2 2\
4 5 5 3 4 4 2 3 3 1 2 2 5 4 3 2\ 5 4 4 3 3 2 2 1 5 5 4 4 3 3 2 2\ 4
3 2 1 2 2 2 3 3 3 4 4 4 5 5;$$
$$3 4 4 4 4 2 3 3 3 3 1 2 2 2 2\  3 4 4 4 2 3 3 3 1 2 2 2 4 3 2\  3 4 4 2 3 3 1 2 2 4 4 3 3 2 2\  4 3 3 2 2 1 4 4 4 3 3 3 2 2 2\
3 2 1 2 2 2 2 3 3 3 3 4 4 4.$$
\end{small}

It is easy to see that $|W_{n,k}|=k^2(n-1)-1$.
We omit the proof that $W_{n,k}$ is crucial with respect to abelian $k$-th powers since it is similar to the proof for the following word $D_{n,k}$, which is $k$ letters shorter than $W_{n,k}$ (see Theorem~\ref{opt-upper-k}).

\medskip
\noindent {\bf A construction of a crucial abelian $k$-power-free word $D_{n,k}$,
where $n\geq 4$ and $k\geq 2$.} As we shall see, the following construction of the word $D_{n,k}$ is optimal for $k =2, 3$. We believe that it is also optimal for any $k \geq 4$ and sufficiently large~$n$ (Conjecture~\ref{conj}).

As our basis for the construction of the word $D_{n,k}$, we use the following word $D_{n,2} = D_n$,  which is constructed as follows, for $n = 4,5,6,7$. (As previously, we use spaces to separate the blocks $\Omega_{n,1}$ and $\Omega'_{n,2}$ in $D_n=\Delta_n$.)
$$D_4 = 3 4 2 3 1  \ 3 2 3 1$$
$$D_5 = 4 5  3 4  2 3  1\ 4 3 2 3  4  1$$
$$D_6=5 6  4 5  3 4  2 3  1 \ 5 4 3 2 3  4  5 1$$
$$D_7=6  7 5 6  4 5  3 4 2 3 1\ 6 5 4 3 2 3 4 5  6 1$$

In general, the first block $\Omega_{n,1}$ in $D_n=\Delta_n=\Omega_{n,1}\Omega'_{n,2}$ is built by
adjoining the factors $i(i+1)$ for
$i=n-1,n-2,\ldots,2$, followed by the letter $1$. The second block $\Omega'_{n,2}$ is built by adjoining the factors $(n-1)(n-2)\ldots 432$, then $34\ldots (n-2)(n-1)$, and finally the letter $1$.

\begin{remark} The above construction coincides with the construction given in \cite[Theorem~5]{EK} for a minimal crucial abelian square-free word over $\A_n$ of length $4n-7$. In fact, the word $D_n$ can be obtained from the minimal crucial abelian cube-free word $E_n$ (defined in Section~\ref{sub-u}) by removing the second block in $E_n$ and deleting the rightmost copy of each letter except $2$ in the first and third blocks of $E_n$. 
\end{remark}

Now we illustrate each step of the construction for the word $D_{n,k}$ by example, letting $n= 4$ and $k=3$. The construction is very similar to that of $W_{n,k}$ and can be
explained directly, but we introduce it recursively, obtaining  $D_{n,k}$ from $D_{n,k-1}$ for $n\geq 4$. Our basis is $D_{n,2}=D_{n}$.
For $n=4$,
$$D_{4,2}=\Omega_{4,1}\Omega'_{4,2}= 3 4 2 3 1  \ 3 2 3 1.$$
Assume that $D_{n,k-1}=\Omega_{n,1}\Omega_{n,2}\ldots
\Omega'_{n,k-1}$ is constructed and implement the following steps
to obtain $D_{n,k}$:

\begin{enumerate}

\item Duplicate $\Omega_{n,1}$ in $D_{n,k-1}$ to obtain the word $$D'_{n,k-1}=\Omega_{n,1}\Omega_{n,1}\Omega_{n,2}\ldots
\Omega'_{n,k-1}.$$ For $n=4$ and $k=3$, $$D'_{4,3}= 3 4 2 3 1  \ 3 4 2 3 1  \ 3 2 3 1.$$

\item Append to the second $\Omega_{n,1}$ in $D'_{4,3}$ the factor
$134\ldots n$ (in our example, 134; in fact, any permutation of
$\{1,3,4,\ldots,n\}$ would work at this place) to obtain
$\Omega_{n,2}$ in $D_{n,k}$. In each of the remaining blocks 
$\Omega_{n,i}$ in $D'_{n,k-1}$, duplicate the rightmost occurrence
of each letter $x$, where $1\leq x\leq n-1$ {and} $x\neq 2$. Finally, in the last block of $D'_{n,k}$ insert the letter $n$ immediately before the leftmost $1$ to obtain the word $D_{n,k}$. For $n = 4$ and $k=3$, we have 
$$D_{4,3}=3 4 4 2 3 3 1 1 \ 3 4 2 3 1 1 3 4 \ 3 2 3 3 4 1 1.$$
\end{enumerate}

We provide five more examples here, namely $D_{5,3}$, $D_{5,4}$, $D_{4,4}$, $D_{4,5}$, and $D_{6,4}$, respectively, so that the reader can check their understanding
of the construction:
\begin{small}
$$4 5 5 3 4 4 2 3 3 1 1\ 4 5 3 4 2 3 1 1 3 4 5 \ 4 3 2 3 3 4 4 5 1 1;$$
$$4 5 5 5 3 4 4 4 2 3 3 3 1 1 1\  4 5 5 3 4 4 2 3 3 1 1 1 3 4 5 \ 4 5 3 4 2 3 1 1 1 3 3 4 4 5 5 \ 4 3 2 3 3 3 4 4 4 5 5 1 1 1;$$
$$3 4 4 4 2 3 3 3 1 1 1 \ 3 4 4 2 3 3 1 1 1 3 4 \ 3 4 2 3 1 1 1 3 3 4 4 \ 3 2 3 3 3 4 4 1 1 1;$$
$$3 4 4 4 4 2 3 3 3 3 1 1 1 1 \  3 4 4 4 2 3 3 3 1 1 1 1 3 4 \ 3 4 4 2 3 3 1 1 1 1 3 3 4 4 \ 3 4 2 3 1 1 1 1 3 3 3 4 4 4 \ 3 2 3 3 3 3 4 4 4 1 1 1 1;$$
$$5 6 6 6 4 5 5 5 3 4 4 4 2 3 3 3 1 1 1\  5 6 6 4 5 5 3 4 4 2 3 3 1 1 1 3 4 5 6 \ 5 6 4 5 3 4 2 3 1 1 1 3 3 4 4 5 5 6 6 \ 5 4 3 2 3 3 3 4 4 4 5 5 5 6 6 1 1 1.$$

\end{small}

\begin{remark} \label{E_n} By construction, $D_{n,3} = E_n$ for all $n\geq 4$.
\end{remark}

\begin{theorem}\label{opt-upper-k} For $n \geq 4$ and $k\geq 2$, we have $\ell_k(n)\leq k^2(n-1)-k-1$.\end{theorem}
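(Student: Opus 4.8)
The plan is to prove the bound by showing that the explicitly constructed word $D_{n,k}$ is crucial with respect to abelian $k$-th powers and has length exactly $k^2(n-1)-k-1$; since $D_{n,k}$ is a word over $\A_n$, this yields $\ell_k(n)\le k^2(n-1)-k-1$ at once. The length is the easy part. I would first read off from the construction that every full block $\Omega_{n,i}$ contains $k-1$ ones, a single $2$, exactly $k$ copies of each $x$ with $3\le x\le n-1$, and $k-1$ copies of $n$, so that $|\Omega_{n,i}|=(k-1)+1+(n-3)k+(k-1)=kn-k-1$. Since $D_{n,k}$ consists of $k-1$ full blocks followed by $\Omega'_{n,k}$, which is $\Omega_{n,k}$ with its final $n$ deleted, I obtain $|D_{n,k}|=k(kn-k-1)-1=k^2(n-1)-k-1$. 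The substance of the theorem is then the verification of the two defining properties of a crucial word: that $D_{n,k}$ avoids abelian $k$-th powers, and that $D_{n,k}x$ fails to do so for every $x\in\A_n$.

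For cruciality I would work directly from the block decomposition $D_{n,k}=\Omega_{n,1}\cdots\Omega_{n,k-1}\Omega'_{n,k}$, in which all full blocks are equal up to permutation. The case $x=n$ is immediate, since $\Omega'_{n,k}n=\Omega_{n,k}$ restores the missing $n$ and makes $D_{n,k}n=\Omega_{n,1}\cdots\Omega_{n,k}$ an abelian $k$-th power. The case $x=1$ is equally direct: by construction the final block ends in the contiguous factor $1^{k-1}$, so $D_{n,k}1$ ends in $1^{k}$. For the intermediate letters $2\le x\le n-1$ I would use the nested suffix structure $\Delta_1\subset\Delta_2\subset\cdots\subset\Delta_n=D_{n,k}$: for each such $x$ one exhibits the minimal suffix $\Delta_x$ and checks that $\Delta_x x=\Omega_{x,1}\cdots\Omega_{x,k}$ splits into $k$ factors equal up to permutation. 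The relevant suffixes and their block boundaries can be read off from the explicit description of the $\Omega_{n,i}$, and the verification that the $k$ sub-blocks carry identical letter multisets is a finite (if bookkeeping-heavy) count; the case $k=3$ is exactly the statement that $E_n=D_{n,3}$ (Remark~\ref{E_n}) is crucial for abelian cubes, and is subsumed here.

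To show that $D_{n,k}$ avoids abelian $k$-th powers I would single out the letter $2$. For $k=2$ the word $D_{n,2}=D_n$ is the crucial abelian square-free word of \cite{EK}, so I may assume $k\ge 3$; then $2$ is the unique letter occurring exactly once per block, hence exactly $k$ times in $D_{n,k}$, and a short computation shows these occurrences are equally spaced, with gap $(k-1)(n-1)+1$. If $P=X_1X_2\cdots X_k$ were an abelian $k$-th power occurring in $D_{n,k}$, every $X_j$ would carry the same number of $2$'s, so $P$ would contain either $0$ or all $k$ of them. In the second case the equal spacing of the $2$'s, one per sub-block, rigidly constrains the common length $L=|P|/k$ to a narrow window about the gap; I would then run a counting argument on a second letter (for instance $n$, or the trailing $1$'s) to force the sub-blocks to coincide with shifts of the natural blocks, and conclude that $P$ could be an abelian $k$-th power only if the blocks of $D_{n,k}$ were themselves all equal up to permutation, which they are not since $\Omega'_{n,k}$ is short exactly one $n$. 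In the first case $P$ lies inside a maximal $2$-free factor, a word over $\{1,3,4,\ldots,n\}$; such factors are short (shorter than the gap) and explicitly described, so one rules them out either by direct inspection or, more systematically, by induction on $n$ after projecting away the separating letter.

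The main obstacle is the avoidance proof, and within it the case where $P$ meets all $k$ copies of $2$: the sub-block boundaries of $P$ need not line up with the block boundaries $\Omega_{n,i}$, so I cannot simply compare blocks, and must instead combine the equal-spacing rigidity of the $2$'s with a second, independent counting statistic to pin the alignment down. A secondary difficulty is organising the cruciality verification for the intermediate letters uniformly in $x$ and $k$, rather than case by case, so that the bookkeeping does not obscure the structural reason the suffixes decompose into $k$ equal sub-blocks.
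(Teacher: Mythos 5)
Your length computation coincides with the paper's, and your cruciality plan is essentially the paper's own: it handles $x=n$ and $x=1$ exactly as you do, and for $2\le x\le n-1$ it writes down the recursions expressing each $\Omega_{m,i}$ in terms of $\Omega_{m-1,i}$, concluding that $D_{n,k}x$ ends with $\Delta_x$ where $\Delta_{x+1}=x(x+1)^{k-1}\Delta_x$ and $\Delta_n=D_{n,k}$; your ``bookkeeping-heavy count'' is precisely this verification. The genuine gap is in the freeness proof, exactly where you flag it. In the branch where the putative power $P$ contains all $k$ occurrences of the letter $2$, you defer to an unspecified ``second counting statistic'' that would force the sub-blocks of $P$ to align with the natural blocks. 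That alignment step is the entire difficulty, it is not supplied, and in fact it is an unnecessary detour: the paper closes this case with no alignment at all, purely by multiplicities. Since all $(k-1)^2$ ones lying between the extreme $2$'s belong to $P$ and the number of $1$'s in $P$ is a multiple of $k$, $P$ contains all $k(k-1)$ ones; since $\Omega'_{n,k}$ ends in $1^{k-1}$, $P$ then has the suffix $W=23^{k-1}1^{k-1}\Omega_{n,2}\cdots\Omega'_{n,k}$, which contains $k^2-1$ threes, so $P$ contains all $k^2$ threes; writing $D_{n,k}=(n-1)n^{k-1}\cdots 34^{k-1}W$, the same divisibility argument propagates to the $4$'s, then the $5$'s, and so on, forcing $P=D_{n,k}$ itself, which is not an abelian $k$-th power because its number of $n$'s is $k^2-k-1\equiv -1 \pmod{k}$. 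This chain of forced inclusions is the idea missing from your proposal.

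Your other branch ($P$ contains no $2$) is also not closed, and one of its supporting claims is false: the maximal $2$-free factors of $D_{n,k}$ are not all ``shorter than the gap,'' since the prefix preceding the first $2$ has length $(n-3)k$, which exceeds the gap interior $(k-1)(n-1)$ as soon as $n>2k+1$; and ``direct inspection'' uniformly in $n$ and $k$ is not an argument. The paper disposes of this branch structurally: between successive $2$'s there are only $k-2$ occurrences of $n$ (its Fact~3), so $P$ contains no $n$ and therefore cannot cross the right boundary of any block $\Omega_{n,i}$ with $2\le i\le k-1$ (those blocks end in $n$); hence $P$ would have to straddle $\Omega_{n,1}$ and $\Omega_{n,2}$, where the placement of the $1$'s --- $\Omega_{n,1}$ ends in the suffix $1^{k-1}$ while all $1$'s of $\Omega_{n,2}$ lie after its $2$ --- forces $P$ to contain a $2$ after all, a contradiction. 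So although your organizing idea (split according to whether $P$ sees $0$ or all $k$ of the $2$'s, noting the $2$'s are equally spaced) runs parallel to the paper's case split by block overlap, both branches of your case analysis are left open, and the counting arguments that close them constitute the actual content of the paper's proof.
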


\begin{proof} Fix $n \geq 4$ and $k \geq 2$. By construction, we have
\[
 D_{n,k}  =  \Omega_{n,1}\Omega_{n,2} \ldots \Omega_{n,k-1} \Omega'_{n,k}
 \]
 where $\Omega_{n,k} = \Omega'_{n,k}n$ and each $\Omega_{n,i}$ contains $(k-1)$ occurrences of the letter~$1$, one occurrence of the letter $2$, $(k-1)$ occurrences of the letter $n$, and $k$ occurrences of the letter $x$ for $x = 3, 4, \ldots, n-1$. Hence, it is easy to see that $|D_{n,k}|=k^2(n-1)-k-1$. We prove that $D_{n,k}$ is crucial with respect to abelian $k$-th powers; whence the result. The following facts, which are easily verified from the construction of $D_{n,k}$, will be useful in the proof.

\medskip
\noindent {\bf Fact 1:} In every block $\Omega_{n,i}$, the letter $3$ has occurrences before and after the single occurrence of the letter $2$.

\medskip
\noindent {\bf Fact 2:} In every block $\Omega_{n,i}$, all $(k-1)$ of the $1$'s occur after the single occurrence of the letter $2$ (as the factor $1^{k-1} = 11\ldots 1$).

\medskip
\noindent {\bf Fact 3:} For all $i$ with $2\leq i \leq k-1$, the block $\Omega_{n,i}$ ends with $n^{i-1} = nn\cdots n$ ($i-1$ times) and the other $(k-1-i+1)$ $n$'s occur (together as a string) before the single occurrence of the letter $2$ in $\Omega_{n,i}$. In particular, there are exactly $k-2$ occurrences of the letter $n$ between successive $2$'s in $D_{n,k}$.

\medskip
\noindent {\bf Freeness}: First we prove that $D_{n,k}$ is abelian $k$-power-free. Obviously, by construction, $D_{n,k}$ is not an abelian $k$-th power (as the number of occurrences of the letter $n$ is not a multiple of $k$) and $D_{n,k}$ does not contain any {\em trivial} $k$-th powers, i.e., $k$-th powers of the form $x^k = xx\ldots x$ ($k$ times) for some letter $x$. Moreover, each block $\Omega_{n,i}$ is abelian $k$-power-free. For if not, then according to the frequencies of the letters in the blocks, at least one of the $\Omega_{n,i}$ must contain an abelian $k$-th power consisting of exactly $k$ occurrences of the letter $x$ for all $x = 3, 4, \ldots, n-1$ and no occurrences of the letters $1$, $2$, and $n$. But, by construction, this is impossible because, for instance, the letter $3$ has occurrences before and after the letter $2$ in each of the blocks $\Omega_{n,i}$ in $D_{n,k}$ (by Fact~1).

Now suppose, by way of contradiction, that $D_{n,k}$ contains a non-trivial abelian $k$-th power, $P$ say. Then it follows from the preceding paragraph that $P$ overlaps at least two of the blocks $\Omega_{n,i}$ in $D_{n,k}$. We first show that $P$ cannot overlap three or more of the blocks  in $D_{n,k}$. For if so, then $P$ must contain at least one of the blocks, and hence $P$ must also contain all $k$ of the $2$'s. Furthermore, all of the $1$'s in each block occur after the letter~$2$ (by Fact~2), so there are $(k-1)^2 = k^2 -2k +1$ occurrences of the letter $1$ between the leftmost and rightmost $2$'s in $D_{n,k}$. Thus, $P$ must contain all $k(k-1) = k^2 - k$ of the $1$'s. Hence, since $\Omega'_{n,k}$ ends with $1^{k-1} = 11\ldots 1$ ($k-1$ times), we deduce that $P$ must end with the word
\[
  W = 23^{k-1}1^{k-1}\Omega_{n,2}\ldots \Omega_{n,k-1}\Omega'_{n,k},
\]
which contains $k$ of the $n$'s, $k(k-1) + (k-1) = k^2 - 1$ of the $3$'s, and $k(k-1)$ occurrences of the letter $x$ for $x = 4, \ldots, n-1$. It follows that $P$ must contain all $k^2$ of the $3$'s. But then, since $$D_{n,k} = (n-1)n^{k-1}\ldots 34^{k-1}W$$ (by construction), we deduce that $P$ must contain all $k^2$ of the $4$'s that occur in $D_{n,k}$, and hence all $k^2$ of the $5$'s, and so on. That is, $P$ must contain all $k^2$ occurrences of the letter $x$ for $x = 3, \ldots, n-1$; whence, since $D_{n,k}$ begins with the letter $n-1$, we have $P = \Omega_{n,1}\Omega_{n,2} \cdots \Omega_{n,k} = D_{n,k}$, a contradiction. 

Thus, $P$ overlaps exactly two adjacent blocks in $D_{n,k}$, in which case $P$ cannot contain the letter $2$; otherwise $P$ would contain all $k$ of the $2$'s, and hence would overlap all of the blocks in $D_{n,k}$, which is impossible (by the preceding arguments). Hence, $P$ lies strictly between two successive occurrences of the letter $2$ in $D_{n,k}$. But then $P$ cannot contain the letter $n$ as there are exactly $k-2$ occurrences of the letter $n$ between successive $2$'s in $D_{n,k}$ (by Fact~3). Therefore, since the blocks $\Omega_{n,i}$ with $2 \leq i \leq k-1$ end with the letter $n$, it follows that $P$ overlaps the blocks $\Omega_{n,1}$ and $\Omega_{n,2}$. Now, by construction, $\Omega_{n,1}$ ends with $1^{k-1} = 11\ldots 1$ ($k-1$ times), and hence $P$ contains $k$ of the $2(k-1) = 2k-2$ occurrences of the letter~$1$ in $\Omega_{n,1}\Omega_{n,2}$. But then $P$ must contain the letter $2$ because $\Omega_{n,1}$ contains exactly $(k-1)$ occurrences of the letter $1$ (as a suffix) and all $(k-1)$ of the $1$'s in $\Omega_{n,2}$ occur after the letter $2$ (by Fact~2); a contradiction.

We have now shown that $D_{n,k}$ is abelian $k$-power-free. It remains to show that $D_{n,k}x$ ends with an abelian $k$-th power for each letter $x = 1, 2, \ldots, n$. \\

\noindent{\bf Cruciality:}  By construction, $D_{n,k}n$ is clearly an abelian $k$-th power. It is also easy to see that $D_{n,k}1$ ends with the (abelian) $k$-th power $\Delta_{1}1 := 11\ldots 1$ ($k$ times).  Furthermore, for all $m = n, n-1,\ldots, 4, 3$, we deduce from the construction that
\begin{eqnarray*}
\Omega_{m,1} &=& (m-1)m^{k-1}\Omega_{m-1,1}, \\
\Omega_{m,2} &=& (m-1)m^{k-2}\Omega_{m-1,2}m, \\
&\vdots& \\
\Omega_{m,k-2} &=& (m-1)m^2\Omega_{m-1,k-2}m^{k-3}, \\
\Omega_{m,k-1} &=& m(m-1)\Omega_{m-1,k-1}m^{k-2}, \\
\Omega_{m,k} &=& (m-1)\Omega_{m-1,k}[1^{k-1}]^{-1}(m-1)m^{k-2}1^{k-1},
\end{eqnarray*}
where $\Omega_{m-1,k}[1^{k-1}]^{-1}$ indicates the deletion of the suffix $1^{k-1}$ of $\Omega_{m-1,k}$.

Consequently, for $x = n-1, n-2, \ldots, 3, 2$, the word $D_{n,k}x$ ends with the abelian $k$-th power $\Delta_x$ given by
\[
\Delta_{x+1} = x(x+1)^{k-1}\Delta_{x} \quad \mbox{where $\Delta_n = D_{n,k}$}.
\]
\end{proof}

Observe that $|D_{n,2}| = 4n-7$ and $|D_{n,3}| = 9n-13$. Hence, since $D_{n,k}$ is a crucial abelian $k$-power-free word (by the proof of Theorem~\ref{opt-upper-k}), it follows from \cite[Theorem~5]{EK} and Corollary~\ref{min-length} that the words $D_{n,2}$ and $D_{n,3}$ are minimal crucial words over $\A_n$ avoiding abelian squares and abelian cubes, respectively. That is, for $k = 2,3$, the word $D_{n,k}$ gives the length of a minimal crucial word over $\A_n$ avoiding abelian $k$-th powers. In the case of $k\geq 4$, we make the following conjecture.

\begin{conjecture}\label{conj} For $k\geq 4$ and sufficiently large $n$, the length of a minimal crucial word over $\A_n$ avoiding abelian $k$-th powers
is given by $k^2(n-1)-k-1$. \end{conjecture}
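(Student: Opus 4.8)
The plan is to prove the matching lower bound. By Theorem~\ref{opt-upper-k} we already have $\ell_k(n)\le k^2(n-1)-k-1$, realised by $D_{n,k}$, so the conjecture is equivalent to $\ell_k(n)\ge k^2(n-1)-k-1$ for $k\ge 4$ and all sufficiently large $n$. I would work in the framework of Section~\ref{sub-l}: for a crucial abelian $k$-power-free word $X=\Delta_n$ the multiplicity $m_x$ of each letter $x<n$ is divisible by $k$ while $m_n\equiv-1\pmod k$, so $|X|=\sum_{x=1}^{n}m_x$. A direct count shows $D_{n,k}$ assigns $m_2=k$, $m_1=k(k-1)$, $m_x=k^2$ for $3\le x\le n-1$, and $m_n=k^2-k-1$. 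Unlike the case $k=3$, this optimal frequency vector is not unique for $k\ge 4$ (for instance two letters of multiplicity $2k$ realise the same total length), so the essentially coordinatewise argument of Section~\ref{sub-l} must be replaced by a bound on a sum. Measuring each letter against the baseline $k^2$, I reformulate the goal as the \emph{deficit} bound $\Phi(X)\le k^2+k+1$, where
\[
\Phi(X)=\sum_{x=1}^{n}\bigl(k^2-m_x\bigr),
\]
since $|X|=nk^2-\Phi(X)$ and $D_{n,k}$ attains $\Phi=(k^2-k)+k+(k+1)=k^2+k+1$.

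The central tool is the block-distribution argument already used in Lemmas~\ref{lem1}--\ref{lem4}. After appending $n$, the word factors as an abelian $k$-th power $\Delta_n n=\Omega_{n,1}\Omega_{n,2}\cdots\Omega_{n,k}$ into $k$ consecutive blocks, each containing $m_x/k$ copies of every letter $x<n$ and $(m_n+1)/k$ copies of $n$. Coupling this equipartition with the nested suffix chain $\Delta_1\subset\Delta_2\subset\cdots\subset\Delta_n$ --- in which every $\Delta_x x$ is itself an abelian $k$-th power and $Y_{x+1}$ carries $\equiv k-1\pmod k$ copies of $x+1$ and $\equiv 1\pmod k$ copies of $x$ --- yields rigid constraints on where the few copies of a low-multiplicity letter may lie relative to the block boundaries (the ``no $x$ between the rightmost $y$'s'' phenomenon of Lemma~\ref{lem1}). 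Using these I would prove, in increasing difficulty: (i) at most one letter among $1,\dots,n-1$ has multiplicity exactly $k$; (ii) the letters whose multiplicity lies in the small range $\{k,2k,\dots,(k-1)k\}$ contribute total deficit at most $k^2$, so that all but boundedly many of the letters $3,\dots,n-1$ are forced to multiplicity exactly $k^2$; and (iii) the outermost letter $n$ satisfies $m_n\ge k^2-k-1$, i.e.\ it occurs $k-1$ times in each block, contributing deficit at most $k+1$. Summing (ii) and (iii) gives $\Phi(X)\le k^2+k+1$, provided $n$ is large enough that the bulk letters $3,\dots,n-1$ dominate the boundary letters $1,2,n$ that receive special treatment.

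The hard part, and the reason the statement remains conjectural, is making step (ii) uniform in $k$. For $k=3$ the only small multiplicities are $3,6,9$, so each of Lemmas~\ref{lem1}--\ref{lem4} reduces to distributing two or three copies of a letter among three blocks, a finite and entirely explicit case check. For general $k$ a letter may occupy up to $k-1$ copies per block across $k$ blocks, the admissible small multiplicities range over $k,2k,\dots,(k-1)k$, and the positional contradictions must be tracked simultaneously for several interacting letters $x<y<z<\cdots$; a naive case analysis grows without bound with $k$. A genuinely uniform argument is therefore required, and the most promising route is an induction matching the recursive construction of $D_{n,k}$ --- peeling off the last block $\Omega_{n,k}$ to reduce $k\mapsto k-1$, or descending one alphabet level from $\Delta_{x+1}$ to $\Delta_x$ --- while maintaining $\Phi$ as a monovariant that never exceeds $k^2+k+1$. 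Designing such a monovariant so that it simultaneously controls the low-multiplicity letters of (ii) and the multiplicity of $n$ in (iii), which are coupled through the shared block boundaries, is the principal obstacle. Once the bound $\Phi(X)\le k^2+k+1$ is in hand the inequality $\ell_k(n)\ge k^2(n-1)-k-1$ is immediate, and the supporting cruciality and freeness bookkeeping that legitimises the block equipartition parallels the proof of Theorem~\ref{opt-upper-k} and is routine.
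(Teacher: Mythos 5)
You have not proved Conjecture~\ref{conj}, and neither does the paper: this statement is an open conjecture, listed explicitly as Problem 1 in the ``Further research'' section, and the paper's best lower bound for $k\geq 4$ is only $k(3n-4)-1$ (Theorem~\ref{rough-l-bound}), far below the conjectured $k^2(n-1)-k-1$. Your proposal correctly reduces the conjecture to the lower bound via Theorem~\ref{opt-upper-k}, correctly computes the multiplicity vector of $D_{n,k}$ (namely $m_1=k(k-1)$, $m_2=k$, $m_x=k^2$ for $3\leq x\leq n-1$, $m_n=k^2-k-1$), and correctly reformulates the goal as the deficit bound $\Phi(X)\leq k^2+k+1$. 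But steps (i)--(iii) are unproven assertions, and you yourself concede that step (ii) --- a uniform-in-$k$ analogue of Lemmas~\ref{lem1}--\ref{lem4} --- is precisely the part you cannot supply. A plan that stops at the acknowledged hard step is not a proof; the conjecture remains open after your argument exactly as it was before it.

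Beyond the admitted gap, the architecture of your plan has a flaw worth naming: you propose to prove (ii) and (iii) as \emph{independent, unconditional} bounds and then add them, but that is strictly stronger than the conjecture and is likely false. Even in the settled case $k=3$, the paper never proves $m_n\geq 5$ unconditionally: Lemma~\ref{lem4} only excludes the specific joint vector $(2,3,6,9,9)$, and a vector such as $(2,6,6,9,\ldots,9)$ sums to exactly $9n-13$, so it need not be (and is not) excluded. In other words, the multiplicity of $n$ and the deficits of the small letters trade off against one another, and any correct argument must rule out \emph{joint} configurations whose total deficit exceeds $k^2+k+1$, rather than bounding each coordinate separately --- which is also why your description of the paper's $k=3$ argument as ``essentially coordinatewise'' is inaccurate: Lemmas~\ref{lem1}--\ref{lem4} already treat interacting pairs, triples, and quintuples of letters. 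Any serious attack on the conjecture will have to organize this combinatorial explosion of joint cases uniformly in $k$; identifying that as the obstacle is a fair assessment of the problem, but it is an assessment, not a proof.
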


\subsection{A lower bound for $\ell_k(n)$}

A trivial lower bound for $\ell_k(n)$ is $nk-1$ as all letters except
$n$ must occur at least $k$ times, whereas $n$ must occur at least
$k-1$ times. We give here the following slight improvement of the
trivial lower bound, which must be rather imprecise though.

\begin{theorem} \label{rough-l-bound} For $n\geq 5$ and $k\geq 4$, we have $\ell_k(n)\geq k(3n-4)-1$.\end{theorem}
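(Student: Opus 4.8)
The plan is to recast the length of a crucial word in terms of per-block multiplicities, reduce the theorem to a single combinatorial inequality about those multiplicities, and then control that inequality by generalizing the distribution arguments behind Lemmas~\ref{lem1}--\ref{lem4}.

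Let $X=\Delta_n$ be a crucial abelian $k$-power-free word over $\A_n$. Since $X$ is crucial, $\Delta_n n=\Omega_{n,1}\Omega_{n,2}\cdots\Omega_{n,k}$ is an abelian $k$-th power, so each block $\Omega_{n,j}$ is a permutation of $\Omega_{n,1}$ and every letter occurs the same number of times in each block. For $1\le x\le n$ let $m_x$ denote this common per-block multiplicity of $x$ in $\Delta_n n$; then $x$ occurs $k m_x$ times in $\Delta_n n$, hence $k m_x$ times in $X$ for $x<n$ and $k m_n-1$ times for $x=n$ (the rightmost $n$ being the appended letter). As every letter $x<n$ occurs at least $k$ times and $n$ occurs at least $k-1$ times, we have $m_x\ge 1$ for all $x$. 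Summing occurrences gives
\[
|X|=k\sum_{x=1}^{n}m_x-1,
\]
so the theorem is \emph{equivalent} to the assertion $\sum_{x=1}^{n}m_x\ge 3n-4$. This value is exactly what the optimal cube word $E_n$ attains (there $m_2=1$, $m_1=m_n=2$, and $m_x=3$ for $3\le x\le n-1$), so the inequality is tight at $k=3$ and, as one expects for a rough bound, loose for larger $k$.

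Writing the \emph{deficiency} $s_x=\max(0,3-m_x)\in\{0,1,2\}$, one has $\sum_x m_x\ge\sum_x\min(m_x,3)=3n-\sum_x s_x$, so it suffices to prove
\[
\sum_{x=1}^{n}s_x\le 4,\qquad\text{equivalently}\qquad 2a+b\le 4,
\]
where $a$ is the number of letters with $m_x=1$ and $b$ the number with $m_x=2$. That is, only a bounded total amount of multiplicity may drop below $3$ per block. I would establish this through two generalizations of the cube lemmas. First, an analogue of Lemma~\ref{lem1}: at most one letter can have $m_x=1$. Taking two such scarce letters $x<y$, splitting $X=A_1A_2\Delta_x$ along the appropriate $Y_i$, and using that each of the $k$ blocks must receive a copy of both $x$ and $y$ while no $x$ can lie between the two rightmost $y$'s yields a contradiction exactly as in Lemma~\ref{lem1}; the case where the scarce letter is $n$ itself (so $m_n=1$) is handled as in Lemma~\ref{lem4}. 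Second, an analogue of the ``$6,6,6$'', ``$3,6,6$'' and $(2,3,6,9,9)$ lemmas bounding how many multiplicity-$2$ letters can coexist, obtained by analyzing how the at most $2k$ occurrences of such a letter must be apportioned among the $k$ blocks consistently with the nesting $\Delta_1\subset\cdots\subset\Delta_n$.

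The main obstacle is this second step. For $k=3$ it required four separate, case-heavy lemmas, and for general $k$ the distribution analysis is more delicate: each scarce letter must now be spread across $k$ blocks rather than three, and ruling out all configurations of two or three scarce letters with multiplicities in $\{1,2\}$ has more cases. I expect the hypotheses $n\ge 5$ and $k\ge 4$ to enter precisely here---enough distinct interior letters are needed to run the pigeonhole and ordering arguments, and the extra blocks available when $k\ge 4$ are what forbid the multiplicity-$2$ configurations that are in fact unavoidable at $k=2$ (where the claimed bound indeed fails). Once $2a+b\le 4$ is secured, the theorem follows immediately from the displayed reduction $|X|=k\sum_x m_x-1$.
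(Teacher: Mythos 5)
Your opening reduction is correct and matches the paper's framework: with $m_x$ the per-block multiplicity, $|X|=k\sum_x m_x-1$, so the theorem is equivalent to $\sum_x m_x\ge 3n-4$, i.e.\ to $2a+b\le 4$, and this is indeed tight against $E_n$. But the proposal stops exactly where the content of the theorem begins. You never prove $2a+b\le 4$; you propose to prove it by redoing Lemmas~\ref{lem1}--\ref{lem4} for general $k$, and you yourself flag this as ``the main obstacle,'' noting that the congruences become mod $k$, the created prohibitions now have $k$ blocks, and the case analysis grows. None of those cases is carried out, and it is not evident that they all survive ``exactly as in Lemma~\ref{lem1}'' as claimed. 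So the only substantive step of the argument is missing: what you have is a correct rephrasing of the statement plus a plan for proving it.

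The idea you are missing --- and the reason the paper's proof is short --- is that \emph{no generalization of the lemmas is needed} once $k\ge 4$. If $X$ is crucial with respect to abelian $k$-th powers, then appending any letter $x$ creates an abelian $k$-th power $\Delta_xx=\Omega_{x,1}\Omega_{x,2}\cdots\Omega_{x,k}$, and its last three blocks $\Omega_{x,k-2}\Omega_{x,k-1}\Omega_{x,k}$ already form an abelian cube ending at the appended letter. Hence $X$ carries the full extension structure of a crucial word with respect to abelian \emph{cubes} (even though $X$ itself need not be cube-free), and the proofs of Lemmas~\ref{lem1}--\ref{lem4} use only that extension and block-distribution structure, never the cube-freeness of the word. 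Applied verbatim to the cube created by appending $n$, they give a cube of length at least $9n-12$; since that cube is a suffix of $\Omega_{n,k-2}\Omega_{n,k-1}\Omega_{n,k}$ and all blocks of the $k$-th power have equal length, each $\Omega_{n,i}$ has length at least $3n-4$, which is precisely your inequality $\sum_x m_x\ge 3n-4$ and yields $|X|\ge k(3n-4)-1$. This also corrects your reading of the hypotheses: $k\ge 4$ is not the range where ``extra blocks'' defeat harder scarce-letter configurations; it is simply the range in which a cube can be cut out of every created $k$-th power ($k=3$ being Theorem~\ref{lower} itself, while for $k=2$ no cube exists inside a square, which is why the bound fails there).
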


\begin{proof} Notice that in proving Lemmas~\ref{lem1}--\ref{lem4} we do not use the fact that one deals with abelian cube-free words,
which we use to obtain a lower bound for $\ell_k(n)$. Indeed,
assuming that $X$ is a crucial word over the $n$-letter alphabet
$\A_n$ with respect to abelian $k$-th powers ($k\geq 4$), we see that
adjoining any letter from $\A_n$ to the right of $X$ must create a
cube as a factor from the right. In particular, adjoining $n$ from
the right side leads to creating a cube of length at least $9n-13$
(by Lemmas~\ref{lem1}--\ref{lem4}). This cube will be
$\Omega_{n,k-2}\Omega_{n,k-1}\Omega'_{n,k}$ in $X$ and thus
$|\Omega_{n,i}|$, for $1\leq i\leq k-1$, will have length at least
$3n-4$, whereas $|\Omega'_{n,k}|$ has length at least $3n-5$,  which yields the result.
\end{proof}

\section{Further research}

\begin{enumerate}

\item Prove or disprove Conjecture~\ref{conj}. Notice that the general construction uses a
greedy algorithm for going from $k-1$ to $k$, which does not work
for going from $n-1$ to $n$ for a fixed $k$. However, we believe
that the conjecture is true.

\item A word $W$ over $\A_n$ is {\em maximal} with respect to a given set of prohibitions if $W$ avoids the prohibitions, but $xW$ and $Wx$ do not avoid the prohibitions for any letter $x\in\A_n$. A maximal word of the shortest length is called a {\em minimal maximal word}. Clearly,
the length of a minimal crucial word with respect to a given set of
prohibitions is at most the length of a minimal maximal word. Thus,
obtaining the length of a minimal crucial word we get a lower bound
for the length of a minimal maximal word.

Can we use our approach to tackle the problem of finding minimal
maximal words? In particular, Korn~\cite{K} proved that the length
$\ell(n)$ of a shortest maximal abelian square-free word over $\A_n$ satisfies
$4n-7\leq\ell(n)\leq 6n-10$ for $n\geq 6$, while Bullock~\cite{B}
refined Korn's methods to show that $6n-29\leq\ell(n)\leq 6n-12$ for
$n\geq 8$. Can our approach improve this result (probably too much to
ask when taking into account how small the gap is), or can it provide an alternative solution to Bullock's result?
\end{enumerate}

\bigskip

\end{document}